\let\Oldsection\section
\renewcommand{\section}{\FloatBarrier\Oldsection}
\let\Oldsubsection\subsection
\renewcommand{\subsection}{\FloatBarrier\Oldsubsection}
\let\Oldsubsubsection\subsubsection
\renewcommand{\subsubsection}{\FloatBarrier\Oldsubsubsection}
\renewcommand{\P}{\mathbb P}
\titleformat{\section}{\bfseries\Large }{\thesection.}{0.5em}{} 
\titleformat{\subsection}{\large\bfseries}{\thesubsection.}{0.5em}{}
\titleformat{\subsubsection}{\bfseries}{\thesubsubsection.}{0.5em}{}
\titleformat{\title}{\bfseries\huge}{\thetitle}{0.5em}{}
\theoremstyle{plain}
\newtheorem{theorem}{Theorem}
\newtheorem{proposition}[theorem]{Proposition}
\newtheorem{lemma}{Lemma}[theorem]
\theoremstyle{definition}
\theoremstyle{remark}
\definecolor{mygray}{gray}{0.3}
\definecolor{bluegray}{RGB}{100, 120, 200}
\renewcommand{\bf}[1]{\textbf{#1}}
\newcommand{\proba}[2]{\mathbb{P}\left( #2 \right)}
\newcommand{\expect}[2]{\mathbb{E}_{#1}\left[ #2 \right]}
\renewcommand{\epsilon}{\varepsilon}
\newcommand{\T}{\mathcal{T}}
\newcommand{\node}[1]{\raisebox{.5pt}{\textcircled{\raisebox{-.9pt} {#1}}}}
\newcommand{\enode}[1]{\emph{\raisebox{.5pt}{\textcircled{\raisebox{-.9pt} {#1}}}}}
\title{\textsc{Eve, Adam and the Preferential Attachment Tree}}
\author{Alice Contat\thanks{Universit\'e Paris-Saclay.\hfill  \href{mailto:alice.contat@universite-paris-saclay.fr}{\texttt{alice.contat@universite-paris-saclay.fr}}}, Nicolas Curien\thanks{Universit\'e Paris-Saclay.\hfill  \href{mailto:nicolas.curien@gmail.com}{\texttt{nicolas.curien@gmail.com}}}, Perrine Lacroix\thanks{Universit\'e Paris-Saclay.\hfill  \href{mailto:perrine.lacroix@universite-paris-saclay.fr}{\texttt{perrine.lacroix@universite-paris-saclay.fr}}}, Etienne Lasalle\thanks{Universit\'e Paris-Saclay.\hfill  \href{mailto:etienne.lasalle@universite-paris-saclay.fr}{\texttt{etienne.lasalle@universite-paris-saclay.fr}}}, Vincent Rivoirard\thanks{Universit\'e Paris-Dauphine.\hfill  \href{mailto:vincent.rivoirard@dauphine.fr}{\texttt{vincent.rivoirard@dauphine.fr}}}}
\date{}
\begin{document}

\maketitle
\begin{abstract}\noindent We consider the problem of finding the initial vertex (Adam) in a Barab\'asi--Albert tree process  $ (\mathcal{T}(n) : n \geq 1)$ at large times. More precisely, given $ \varepsilon>0$, one wants to output a subset $ \mathcal{P}_{ \varepsilon}(n)$ of  vertices of  $ \mathcal{T}(n)$ so that the initial vertex belongs  to $ \mathcal{P}_ \varepsilon(n)$ with probability at least $1- \varepsilon$ when $n$ is large. It has been shown by Bubeck, Devroye \& Lugosi \cite{bubeck2017finding}, refined later  by Banerjee \& Huang \cite{banerjee2021degree},  that one needs to output at least $ \varepsilon^{-1 + o(1)}$  and at most $\varepsilon^{-2 + o(1)}$ vertices to succeed. We prove that the exponent in the lower bound is sharp and the key idea is that Adam is either a ``large degree" vertex or is a neighbor of a ``large degree" vertex (Eve). \end{abstract}


\begin{figure}[!h]
 \begin{center}
 \includegraphics[width=10.6cm]{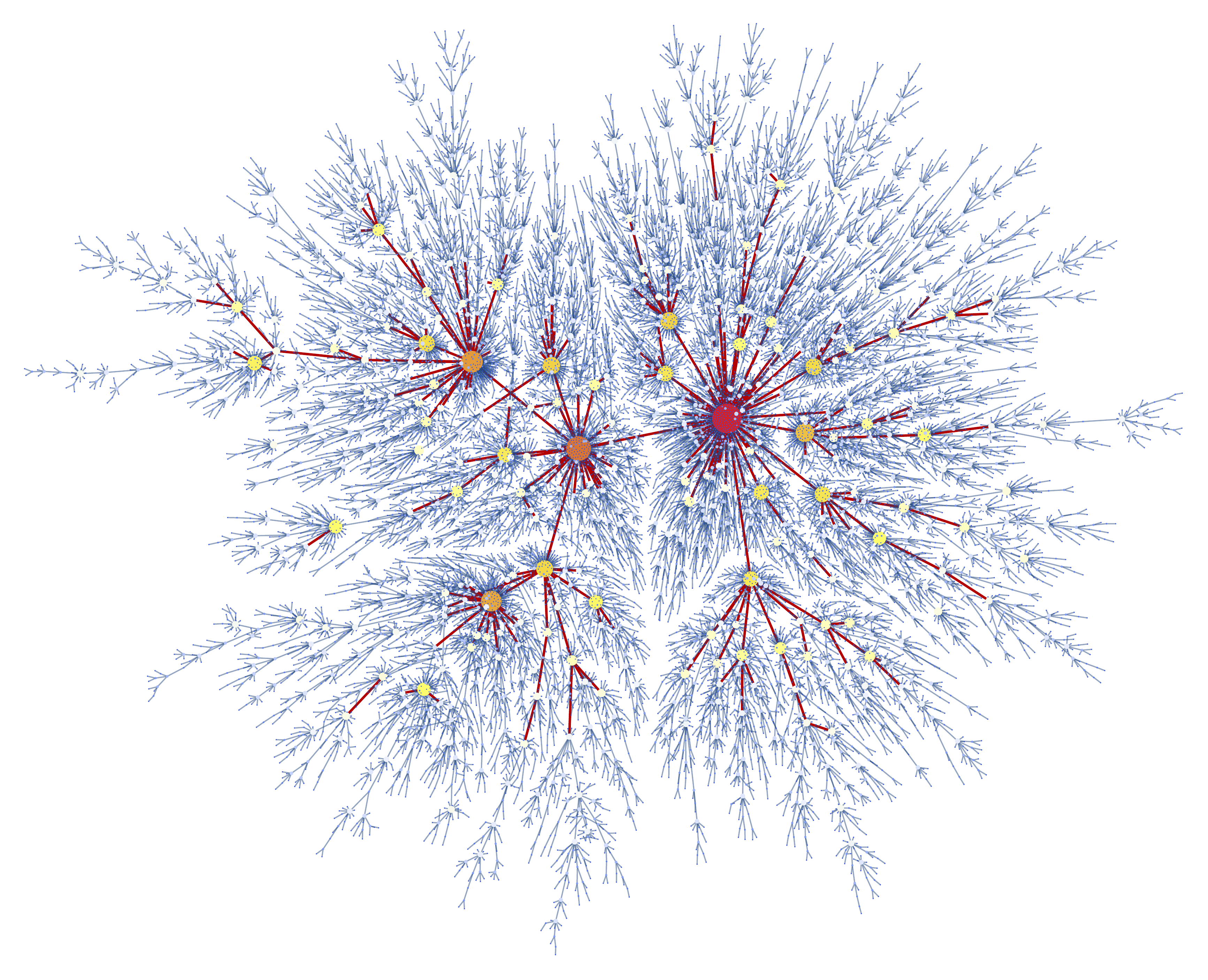}\\

 \caption{A simulation of $ \mathcal{T}(20000)$ where the degree of vertices are represented by disks of increasing sizes and varying colors. Given the tree, the vertices adjacent to the thick red edges form our output subset of vertices which contains Adam (and Eve) with large probability. Notice that this set is not necessarily connected.}
 \end{center}
 \end{figure}
\section*{Introduction}
We consider the classical Barab\'asi--Albert growing tree process \cite{barabasi1999emergence}: We start from $ \mathcal{T}(1)$ the tree with one single vertex $\raisebox{.5pt}{\textcircled{\raisebox{-.9pt} {1}}}$ labeled $1$, at step $2$ the vertex $ \raisebox{.5pt}{\textcircled{\raisebox{-.9pt} {2}}}$ labeled $2$ attaches to $1$ to form $ \mathcal{T}(2)$, and for $n \geq 3$ the tree  $\T(n)$ is obtained from $\T(n-1)$ inductively, by attaching the vertex $ \raisebox{.5pt}{\textcircled{\raisebox{-.9pt} {$n$}}}$ to a random vertex of label $1 \leq i \leq n-1$ with probability proportional to $d_i(n)$, the degree of node labeled $i$ in $\T(n)$. This process is perhaps the most popular random graph constructed by \emph{preferential attachment} rules.  They have received a lot of attention in the last decades since they can serve as sensible models for many real-world networks, in particular due to their  remarkable \emph{scale-free property} (i.e.~asymptotic degree distribution with a power law tail). The literature on the subject is
extremely vast, and we refer to \cite{van2009random} for  background and references. In this note, we consider the so-called \emph{network archeology}, see the seminal reference \cite{bubeck2017finding}.

\paragraph{Network archeology and finding Adam.} Assume that we are given the random tree $\mathcal{T}(n)$ without its labeling. The goal is to recover, as an archeologist, the initial vertex  $\raisebox{.5pt}{\textcircled{\raisebox{-.9pt} {1}}}$, called Adam.  Of course this is not possible for sure, since, for example, the vertices $\raisebox{.5pt}{\textcircled{\raisebox{-.9pt} {1}}}$ and $ \raisebox{.5pt}{\textcircled{\raisebox{-.9pt} {2}}}$ are exchangeable in $ \mathcal{T}(n)$. Therefore, given a threshold $  \varepsilon>0$, our goal is to output $\mathcal{P}_ { \varepsilon}(n)$, a subset of vertices of $\mathcal{T}(n)$ depending only on the tree structure (as labels are not available), as small as possible, so that   \begin{eqnarray} \label{eq:adam} \liminf_{n \to \infty} \mathbb{P}( \raisebox{.5pt}{\textcircled{\raisebox{-.9pt} {1}}} \in \mathcal{P}_ \varepsilon(n)) \geq 1- \varepsilon.  \end{eqnarray} Various recipes have been developed in the literature mainly based on the notion of \emph{degree}, \emph{Jordan centrality} or \emph{product centrality} of a vertex. They yield subsets $ \mathcal{P}_{ \varepsilon}(n)$ whose size does not depend on $n$ (asymptotically). 
In the case when $( \mathcal{T}(n) : n \geq 1)$ is a uniform random recursive tree process (i.e.~when the vertex labeled $n+1$ connects to a uniform vertex among $\{ \raisebox{.5pt}{\textcircled{\raisebox{-.9pt} {1}}} , \cdots, \raisebox{.5pt}{\textcircled{\raisebox{-.9pt} {$n$}}}\}$), it is known that one can take $ \mathcal{P}_ \varepsilon(n)$ whose size $| \mathcal{P}_{ \varepsilon}(n)|$  converges as $n \to \infty$  and grows as $ \varepsilon \downarrow 0$, see \cite{bubeck2017finding}. In the Barab\'asi--Albert model, by taking the $k$ nodes of largest degree in $ \mathcal{T}(n)$ it is proved in \cite{bubeck2017finding} and refined in \cite{banerjee2021degree} that one can construct $ \mathcal{P}_{ \varepsilon}(n)$ satisfying \eqref{eq:adam} so that 
 \begin{eqnarray} \label{eq:upperbounddegree} |\mathcal{P}_{ \varepsilon}(n)| \leq \varepsilon^{-2+ o(1)},  \end{eqnarray} as $ \varepsilon \to 0$.  On the other hand, a lower bound is proved in \cite{bubeck2017finding} and shows that any subset $\mathcal{P}_{ \varepsilon}(n)$ of vertices of $\mathcal{T}(n)$ which does not depend on the labeling of  $ \mathcal{T}(n)$ and  satisfies \eqref{eq:adam}, must obey
$$ |\mathcal{P}_{ \varepsilon}(n)| \geq \varepsilon^{-1+ o(1)},$$
as $ \varepsilon \to 0$. In this note, we fill this gap and prove that the exponent in the lower bound is sharp:

\begin{theorem}\label{theo:main_result}
 Let $ \eta \in (0,1/8)$ be fixed. For any $ \varepsilon \in (0,1)$, one can construct subsets $ \mathcal{P}_{  \varepsilon}(n)$ of vertices of $\mathcal{T}(n)$ which only depends on the tree structure of $ \mathcal{T}(n)$ (not on its labeling)  so that for all $ \varepsilon$ small enough, 
  \begin{eqnarray*} \mathbb{P} \left( \sup_{n \geq 1} |\mathcal{P}_{ \varepsilon}(n)| \leq \varepsilon^{-1- \eta} \quad \mbox{and} \quad   \emph{\raisebox{.5pt}{\textcircled{\raisebox{-.9pt} {1}}}} \in  \mathcal{P}_{ \varepsilon}(n)  \mbox{ for all } n \geq 1\right) \geq 1 - \varepsilon^{1- \eta} . \end{eqnarray*} 
\end{theorem}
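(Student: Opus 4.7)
The strategy is guided directly by the abstract and figure: Adam is always adjacent to Eve in $\mathcal{T}(n)$, and both typically have degree of order $\sqrt{n}$, so the edge $(1,2)$ is singled out by its unusually large degree product $d_1(n) d_2(n) \asymp n$. The natural candidate is therefore
\[
\mathcal{P}_\varepsilon(n) \;:=\; \bigcup_{\substack{(u,v) \in E(\mathcal{T}(n))\\ d_u(n)\, d_v(n) \,\geq\, \theta(\varepsilon)\cdot n}} \{u,v\},
\]
for a well-chosen threshold $\theta(\varepsilon)$ polynomial in $\varepsilon$ up to logarithmic corrections; equivalently, one takes the endpoints of the top $K := \lceil \varepsilon^{-1-\eta}/2\rceil$ edges of $\mathcal{T}(n)$ ranked by degree product, which gives $|\mathcal{P}_\varepsilon(n)| \leq \varepsilon^{-1-\eta}$ deterministically for every $n$.

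For the size of the qualifying edge set, I would use the standard heuristic $d_i(n) \approx \sqrt{n/i}$: an edge $(i,j)$ with $i<j$ has product $\geq \theta n$ essentially iff $ij \leq \theta^{-2}$, and combined with the attachment probability $\mathbb{P}((i,j) \in E(\mathcal{T}(n))) \approx 1/(2\sqrt{i(j-1)})$, a direct first-moment computation gives
\[
\mathbb{E}\bigl[\#\{\text{qualifying edges}\}\bigr] \,=\, O\!\left(\log(1/\theta)/\theta\right).
\]
Thus the threshold $\theta_K$ needed to have at most $K$ qualifying edges is $\theta_K \asymp \log(K)/K \asymp \varepsilon^{1+\eta}\log(1/\varepsilon)$.

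For the capture of Adam, it suffices that the edge $(1,2)$ qualifies, i.e., $d_1(n) d_2(n)/n \geq \theta_K$. A direct computation with the preferential-attachment rule shows that
\[
M_n \;:=\; \frac{d_1(n)\, d_2(n)}{n-1},\qquad n\geq 2,
\]
is a non-negative martingale, with $M_n \to \xi_1 \xi_2$ almost surely, where $\xi_i := \lim_{n\to\infty} d_i(n)/\sqrt{n}$ are the explicit limits coming from the classical continuous-time Yule embedding of $\mathcal{T}(n)$. An explicit computation with the joint density of $(\xi_1,\xi_2)$ near the origin gives $\mathbb{P}(\xi_1\xi_2 < s) = O(s\log(1/s))$, which matches the trade-off above: plugging $s = \theta_K$ yields capture failure $O(\varepsilon^{1+\eta}\log^2(1/\varepsilon))$, itself bounded by $\varepsilon^{1-\eta}$ for $\varepsilon$ small enough.

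To promote the pointwise-in-$n$ estimates above to the uniform statement, I would apply a Doob-type argument to the martingale $M_n$ (possibly after truncation) to control $\inf_{n\geq 2} M_n$, while the small-$n$ regime ($n \leq n_0(\varepsilon)$) is handled trivially by including all of $V(\mathcal{T}(n))$ into $\mathcal{P}_\varepsilon(n)$. The main obstacle will be precisely this uniformity step: the size-versus-failure trade-off is essentially tight (the exponent $-1$ cannot be attained without a logarithmic loss, which is exactly what forces $\eta>0$), so the infimum estimate must lose no more than the built-in $\eta$-slack. The delicate point is that $\mathbb{E}[1/M_\infty]=+\infty$ (the densities of the $\xi_i$ are bounded near $0$), so a naive Doob inequality applied to $1/M_n$ fails, and one must either truncate carefully or exploit additional monotonicity from the Yule embedding; controlling the number of qualifying edges uniformly in $n$ raises the same kind of technicality.
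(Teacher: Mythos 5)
Your outline is close in spirit to the paper's (both single out the Adam--Eve edge by a degree-product criterion), but your criterion is the plain product $d_u(n)d_v(n) \geq \theta\, n$ whereas the paper uses the asymmetric condition that $D_iD_j^2 > \varepsilon$ or $D_jD_i^2 > \varepsilon$, i.e.\ $\min(D_i,D_j)\max(D_i,D_j)^2 > \varepsilon$. The asymmetric criterion buys a logarithm-free edge count of order $\varepsilon^{-1}$, while the plain product picks up a $\log(1/\varepsilon)$ factor, but this is absorbed by the $\eta$-slack so your choice is viable. You are also right that $M_n = d_1(n)d_2(n)/(n-1)$ is a positive martingale, and your estimate $\mathbb{P}(\xi_1\xi_2 < s) = O(s\log(1/s))$ is a conservative overestimate: the explicit joint law the paper invokes gives $\xi_1 = B_1 B_2 Z_3$ and $\xi_2 = (1-B_1)B_2Z_3$ (conditionally on $\node{3}\sim\node{2}$), which is negatively correlated near the origin, so the true left tail is in fact $O(s)$ with no logarithm.

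However, the two points you defer as technicalities are the actual content of the proof, and there are genuine gaps. For uniformity in $n$: the paper sidesteps $\mathbb{E}[1/M_\infty]=\infty$ by a stopping-time trick that applies Doob's maximal inequality not to $1/M_n$ but to the two positive martingales $D_1(n)$ and $D_2(n)$ separately, after the stopping time $\theta$ when the criterion first fails for the edge $(1,2)$: conditionally on $\theta<\infty$, with probability at least $1/3$ both processes stay within a factor $3$ of their stopped values forever, so the limit degrees also violate the criterion up to the constant $3^3$, reducing the uniform-in-$n$ claim to the $n=\infty$ estimate. Some concrete mechanism of this kind is missing from your sketch. For the size bound: the first-moment estimate $\mathbb{E}[\#\{\text{qualifying edges}\}] = O(\theta^{-1}\log(1/\theta))$ combined with Markov's inequality only yields a failure probability of order $\varepsilon^{\eta}\log(1/\varepsilon)$, far short of the required $\varepsilon^{1-\eta}$; and the top-$K$ reformulation does not escape this, because showing that $(1,2)$ is among the top $K$ edges still reduces to controlling the upper tail of the very same count. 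The paper resolves this by restricting to a high-probability event on which $\sup_n D_i(n) \leq (i/\varepsilon)^{3\eta/2}/\sqrt{i}$ for every $i$, dominating the qualifying edge count there by a sum of conditionally independent Bernoulli indicators, in turn by a Poisson variable, and applying Bennett's inequality for exponential concentration. A substitute for this concentration step is required and absent from your proposal.
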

\begin{samepage}Our strategy to construct $ \mathcal{P}_{ \varepsilon}(n)$ can be summarized in one sentence: \begin{center}\emph{we use Eve to find Adam}! \end{center} \end{samepage}
Let us be more precise and introduce a piece of notation.
In order to avoid confusion (especially with time steps), we usually write  $\raisebox{.5pt}{\textcircled{\raisebox{-.9pt} {$i$}}}$ for the vertex $i$, so that the vertex set of $ \mathcal{T}(n)$ is $\{ \raisebox{.5pt}{\textcircled{\raisebox{-.9pt} {$1$}}}, \raisebox{.5pt}{\textcircled{\raisebox{-.9pt} {$2$}}},\dots , \raisebox{.5pt}{\textcircled{\raisebox{-.9pt} {$n$}}}\}$. For $1 \leq i \leq n$, we denote by $d_i(n)$ the degree of  $\raisebox{.5pt}{\textcircled{\raisebox{-.9pt} {$i$}}}$ in $ \mathcal{T}(n)$, the sum of degrees in $ \mathcal{T}(n)$ being equal to $2(n-1)$.  We also introduce the sequence $ (\alpha_n : n \geq 2)$ defined by $\alpha_2=1$ and $$\frac{\alpha_{n+1}}{\alpha_n} = 1 + \frac{1}{2(n-1)},$$ so that $\alpha_n = \prod_{k=1}^{n-2} (1+ \frac{1}{2k} ) \sim \frac{2 \sqrt{n}}{ \sqrt{ \pi}}$ as $n \to \infty$. It follows from the preferential attachment dynamics that $ \frac{d_i(n)}{\alpha_n}$ is a positive martingale (for the canonical filtration) and so for every $i \geq1$,
$\frac{d_i(n)}{ \alpha_n}  $ converges almost surely for each fixed $i$ as $n \to \infty$. To match with the notation of \cite{pekoz2017joint}, which is extensively used in the following, we introduce the renormalized degrees and their limits \begin{eqnarray} \label{eq:defrenormalizeddegrees} D_i(n) := \frac{d_i(n)}{ \alpha_n \cdot \sqrt{\pi}} \quad \mbox{ and } \quad  	D_{i}(n) \xrightarrow[n\to\infty]{a.s.} \mathbf{D}_{i}.  \end{eqnarray} We say that $  \mathbf{D}_i$ is the ``limit" degree of $ \raisebox{.5pt}{\textcircled{\raisebox{-.9pt} {$i$}}}$.  Actually the former point-wise convergence of martingales holds in the stronger $\ell^{\infty}$ sense as shown by M\'ori \cite{mori2005maximum}:    
\begin{eqnarray} \label{eq:degreelimite} (D_{i}(n) : i \geq 1) \xrightarrow[n\to\infty]{a.s.} ( \mathbf{D}_{i} : i \geq 1).\end{eqnarray}
 As mentioned before, large degree vertices  in $ \mathcal{T}(n)$ have already been used to find Adam, see \cite{bubeck2017finding,banerjee2021degree}. More precisely, it has been proved in \cite{banerjee2021degree} that the first $ \varepsilon^{-2+o(1)}$ vertices of largest degree of $ \mathcal{T}(n)$ are roughly made of all the vertices of $ \mathcal{T}(n)$ with renormalized degree $D_i(n)$ larger than $ \varepsilon$. On the other hand, a classical estimate (see \eqref{eq:control_adam_toujours} below) shows that $ \mathbf{D}_1$ is at least $ \varepsilon$ with probability of order $ 1- \varepsilon$ thus entailing \eqref{eq:adam} and \eqref{eq:upperbounddegree}. Our idea is then the following: instead of looking for Adam, we rather consider the pair Adam + Eve, where Eve stands for the vertex $\raisebox{.5pt}{\textcircled{\raisebox{-.9pt} {$2$}}}$. Indeed,  when Adam has a ``small degree", this means that at the first step of the process, the vertex $\raisebox{.5pt}{\textcircled{\raisebox{-.9pt} {$3$}}}$ must have connected to $\raisebox{.5pt}{\textcircled{\raisebox{-.9pt} {$2$}}}$ instead of $\raisebox{.5pt}{\textcircled{\raisebox{-.9pt} {$1$}}}$ and this will force the limit degree of Eve to be large. The subset $ \mathcal{P}_{ \varepsilon}(n)$ is then constructed by gathering the (vertices incident to) edges  of $ \mathcal{T}(n)$ connecting the not-so-small degree vertices of $ \mathcal{T}(n)$ which are connected to large degree vertices. The proper construction is presented below.\medskip

\textbf{Acknowledgments:} We thank Gabor Lugosi for  incentives  and discussions about network archeology. This work has been supported by ANR RanTanPlan and by the ERC Advanced Grant 740943 (GeoBrown). We are indebted to Matthieu Lerasle for pointing us a shortcoming in the first version of this article.


\section{Definition of the packet}
Let $ \varepsilon \in (0,1)$ and $n \geq 1$ be fixed. We set  \begin{equation} \label{eq:packet}
\mathcal{P}_{  \varepsilon}(n) := 
\left \{ \node{i}, \ 1 \leq i \leq n : \ \begin{array}{c} \exists 1 \leq  j \leq n, \\ \node{j} \sim \node{i} \end{array},  \quad \begin{array}{c} D_j(n) \cdot (D_{i}(n))^{2} > \varepsilon \\   \mathrm{or} \\ D_{i}(n) \cdot (D_{j}(n))^{2}> \varepsilon \end{array}\right \}, 
\end{equation}
where $\node{j} \sim \node{i}$ denotes the fact that $\node{j}$ and $\node{i}$ are connected by an edge in $\mathcal{T}(n)$. We shall call this subset the $ \varepsilon$-packet of $ \mathcal{T}(n)$. It is thus made of the set of neighbor vertices $\node{i}$ and $\node{j}$ in $ \mathcal{T}(n)$ such that their degrees are not too small and either $d_i(n)$ or $d_j(n)$ is large. Clearly, the construction of the $ \varepsilon$-packet only depends on the graph structure of $ \mathcal{T}(n)$ and not on its labeling (and even better, it is a local construction). In Section~\ref{sec:into}, we prove that $ \mathcal{P}_{\varepsilon}(n)$ contains Adam (and Eve) with large probability. In Section~\ref{sec:size}, we prove that $ \mathcal{P}_{\varepsilon}(n)$ has the appropriate size. 
\medskip

To help intuition, let us describe the heuristics behind \eqref{eq:packet}: In most cases, Adam is a large degree vertex and in fact its limit degree is larger than $ \varepsilon$ with probability at least $1- \varepsilon$, see \eqref{eq:control_adam_toujours} below. Hence, we shall restrict to vertices with renormalized degree larger than $ \varepsilon^{-1+o(1)}$. But as mentioned in the introduction, there are of order $ \varepsilon^{-2+o(1)}$ vertices satisfying this bound, providing 
the bound~\eqref{eq:upperbounddegree}. 
To improve the latter, observe that when Adam is unusually small, then Eve must be large. Indeed, since the preferential attachment process tends to increase the degree of high degree nodes as $n$ grows, whether $\node{3}$ gets attached to $\node{1}$ or $\node{2}$ greatly impacts the asymptotic behavior of Adam or Eve's degrees. Said differently, the neighbor of $\node{3}$ in $\mathcal{T}(3)$ will  be very unlikely to have an abnormally small limit degree. To fix ideas, imagine that $\node{2} \sim \node{3}$, then we will prove that $  \mathbf{D}_1\mathbf{D}_2^2 > \varepsilon$ with probability at least $1- \varepsilon^{1- \eta}$ thus motivating our definition of the $ \varepsilon$-packet.


\section{Adam is in $ \mathcal{P}_ \varepsilon(n)$ with high probability} \label{sec:into}
Recall the definition of the $ \varepsilon$-packet in \eqref{eq:packet}. The goal of this section is to prove:
\begin{proposition}[Adam is in the $ \varepsilon$-packet]\label{prop:adam_in_the_packet} Let $ \eta\in (0,1)$ be fixed. For $\varepsilon>0$ small enough, we have 
\begin{equation*} \mathbb{P} \left(\emph{\raisebox{.5pt}{\textcircled{\raisebox{-.9pt} {1}}}} \in  \mathcal{P}_{ \varepsilon}(n)  \mbox{ for all } n \geq 1\right) \geq 1 - \varepsilon^{1 - \eta}.
\end{equation*} 
\end{proposition}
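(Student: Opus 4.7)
The edge $\node{1}\node{2}$ is present in $\mathcal{T}(2)$ and never removed, so $\node{2}$ is a neighbour of $\node{1}$ in every $\mathcal{T}(n)$, $n\geq 2$; by the very definition~\eqref{eq:packet} it is therefore enough to show
\begin{equation*}
\mathbb{P}\bigl( \exists\, n \geq 2 : \max\bigl(D_1(n)D_2(n)^2,\, D_2(n)D_1(n)^2\bigr) \leq \varepsilon \bigr) \leq \varepsilon^{1-\eta}.
\end{equation*}
My plan has three stages: identify the joint limit of $(D_1(n),D_2(n))$, prove the analogue of this estimate for the limiting variables, and then transfer it to a statement uniform in $n$.

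\textbf{Joint limit via P\'olya urn.} Set $S_n = d_1(n)+d_2(n)$, $R_n = d_1(n)/S_n$ and $\hat S_n = S_n/(\alpha_n\sqrt{\pi})$. A short computation using the preferential-attachment rule shows that $R_n$ is a $[0,1]$-valued martingale and that, at the (random) times at which the new vertex attaches to $\{\node{1},\node{2}\}$, the process $R_n$ evolves exactly as the fraction of a $(1,1)$-P\'olya urn. Hence $R_n\to U\sim\mathrm{Uniform}(0,1)$ a.s., with $U$ independent of $T:=\lim_n \hat S_n = \mathbf{D}_1+\mathbf{D}_2$, so that $(\mathbf{D}_1,\mathbf{D}_2)=(UT,(1-U)T)$. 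Combined with the half-normal density $f_{\mathbf{D}_1}(x)=(2/\sqrt{\pi})e^{-x^2}$ coming from the Yule embedding central to \cite{pekoz2017joint}, this forces $T$ to have the density $f_T(x)=(4x^2/\sqrt{\pi})e^{-x^2}$ on $(0,\infty)$, which vanishes quadratically at the origin.

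\textbf{Limit estimate.} Set $V:=\min(U,1-U)$; then $\max(\mathbf{D}_1\mathbf{D}_2^2,\mathbf{D}_2\mathbf{D}_1^2)=V(1-V)^2T^3\geq VT^3/4$. Since $V$ is uniform on $(0,1/2)$ and independent of $T$, integrating $V$ first gives
\begin{equation*}
\mathbb{P}\bigl(VT^3\leq 8\varepsilon\bigr)\leq\int_0^\infty\min\!\Bigl(1,\,\tfrac{16\varepsilon}{t^3}\Bigr)\,\tfrac{4t^2}{\sqrt{\pi}}e^{-t^2}\,dt=O\bigl(\varepsilon\log(1/\varepsilon)\bigr)\leq\tfrac12\varepsilon^{1-\eta}
\end{equation*}
for $\varepsilon$ sufficiently small.

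\textbf{From the limit to all $n$.} For the uniform-in-$n$ statement I would rely on the general optional-stopping inequality: if $M_n$ is a positive, uniformly integrable martingale with limit $M_\infty$, then for any $C>1$
\begin{equation*}
\mathbb{P}\bigl(\inf\nolimits_n M_n\leq\lambda\bigr)\leq\tfrac{C}{C-1}\mathbb{P}\bigl(M_\infty\leq C\lambda\bigr).
\end{equation*}
Applied to the bounded martingale $R_n\to U$ (and to $1-R_n$) and to the positive martingale $\hat S_n\to T$, this yields $\mathbb{P}(\inf_n\min(R_n,1-R_n)\leq\delta)\leq 8\delta$ and, crucially, $\mathbb{P}(\inf_n \hat S_n\leq s)\leq Cs^3$, the cubic exponent being the quantitative input coming from $f_T$ vanishing like $x^2$ at $0$. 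The main obstacle is to combine these two one-sided estimates without losing the exponent of $\varepsilon$: a plain union bound on $\{V_n\leq\delta\}\cup\{\hat S_n\leq s\}$ with the multiplicative constraint $\delta s^3\asymp\varepsilon$ only produces $\varepsilon^{1/2}$, which is much worse than the target $\varepsilon^{1-\eta}$ (for $\eta\in(0,1/8)$ the required exponent is $>7/8$). To recover the sharp exponent I would instead argue that on a high-probability event the uniform-in-$n$ bound essentially reduces to the limit one of Step~2: using M\'ori's $\ell^\infty$ convergence~\eqref{eq:degreelimite} together with Doob's $L^2$ maximal inequality applied to the reverse-time increments $\sup_{k\geq n}|D_i(k)-D_i(n)|$ (which are controlled via $\mathbb{E}[\mathbf{D}_i^2]<\infty$), one shows that $V_n\hat S_n^3$ stays within a constant factor of $VT^3$ for all $n\geq N_0$ with probability at least $1-\tfrac12\varepsilon^{1-\eta}$, while the initial window $n\leq N_0$ is treated by a separate deterministic argument using that the degrees are then bounded. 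Making this reduction quantitatively tight enough to attain $\varepsilon^{1-\eta}$ for every $\eta\in(0,1/8)$ is where I expect the real technical work to lie.
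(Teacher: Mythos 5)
Your Steps 1 and 2 are correct and take a genuinely different route from the paper's. Where the paper conditions on $\node{3}\sim\node{2}$ and invokes the explicit Pek\"oz--R\"ollin--Ross law $(\mathbf{D}_1,\mathbf{D}_2)=(B_1B_2Z_3,(1-B_1)B_2Z_3)$, then establishes four separate tail inequalities (their Lemma~2.2) and pieces them together via a logarithmic discretization of $\{\varepsilon<\mathbf{D}_1\leq 1\}$, you exploit the P\'olya-urn structure of $(d_1+d_2, d_1/(d_1+d_2))$ to write $(\mathbf{D}_1,\mathbf{D}_2)=(UT,(1-U)T)$ with $U\sim\mathrm{Uniform}(0,1)$ independent of $T\sim\mathrm{GG}(3,2)$, and then a single one-line integral gives $\mathbb{P}(\max(\mathbf{D}_1\mathbf{D}_2^2,\mathbf{D}_2\mathbf{D}_1^2)\leq\varepsilon)=O(\varepsilon\log(1/\varepsilon))$. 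I checked the identification: the marginal half-normal density of $\mathbf{D}_1$ and the Beta--Gamma algebra ($\beta(3,1)\cdot\mathrm{GG}(5,2)\overset{d}{=}\mathrm{GG}(3,2)$) confirm your representation, and the independence of $U$ and $T$ follows from averaging the conditional laws over $\node{3}\sim\node{1}$ and $\node{3}\sim\node{2}$. This is cleaner and more transparent than the paper's decomposition.

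Step 3, however, has a real gap, which you yourself flag. You correctly diagnose that bounding $\{\inf_n V_n\leq\delta\}$ and $\{\inf_n\hat S_n\leq s\}$ separately and union-bounding loses the exponent down to $\varepsilon^{1/2}$ (i.e.\ only $\eta\geq 1/2$, not all $\eta\in(0,1)$), but the remedy you sketch does not obviously close it: Doob's $L^2$ maximal inequality gives an \emph{additive} bound on $\sup_{k\geq n}|D_i(k)-D_i(n)|$, whereas what you need is a \emph{multiplicative} bound on $V_n\hat S_n^3$ near $V T^3$ precisely in the regime where $V$ or $T$ is small, which is where the bad event lives; additive control is useless there. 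The paper's fix is exactly the stopping-time argument you were circling around, applied to the right object: set $\theta=\inf\{n: D_1(n)D_2(n)^2\leq\varepsilon\text{ or }D_2(n)D_1(n)^2\leq\varepsilon\}$ and, using that $(D_i(n))_{n\geq\theta}$ is a positive martingale started from $D_i(\theta)$, apply Doob's maximal inequality for positive supermartingales to each of $D_1$ and $D_2$ to get that conditionally on $\{\theta<\infty\}$, with probability at least $1/3$ both $\mathbf{D}_i\leq 3D_i(\theta)$ ($i=1,2$), hence $\mathbf{D}_1\mathbf{D}_2^2\wedge\mathbf{D}_2\mathbf{D}_1^2\leq 27\varepsilon$. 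This yields $\mathbb{P}(\theta<\infty)\leq 3\,\mathbb{P}(\mathbf{D}_1\mathbf{D}_2^2\leq 27\varepsilon\text{ or }\mathbf{D}_2\mathbf{D}_1^2\leq 27\varepsilon)$, which your limit estimate then controls. That argument slots directly into your framework and replaces your Step 3 entirely; without it, your proof is incomplete.
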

By \eqref{eq:degreelimite}, the indices belonging to the packet $ \mathcal{P}_ \varepsilon(n)$ converge as $n \to \infty$ to a random subset of indices and we shall first focus on those. For this purpose, we  examine the distribution of $(\mathbf{D}_{1}, \mathbf{D}_{2})$ which is explicit thanks to \cite{pekoz2017joint}. In the rest of the paper $ \mathrm{Cst}>0$ denotes a constant which does not depend on $\varepsilon, \eta, i$ or $n$, but whose value may change from line to line. 

\subsection{Estimations on the joint degrees of Adam and Eve}
 
For $u>0$ and $v>0$, we recall the definition of the Beta distribution $\beta(u, v)$ and of the generalized Gamma distribution  $\mathrm{GG}(u,v)$ whose densities are given for $x\in{\mathbb R}$ by 
 \begin{eqnarray} \label{eq:densities} f_{\beta(u,v)}(x) = \frac{\Gamma(u+v)}{\Gamma(u) \Gamma(v)} x^{u-1}(1-x)^{v-1} \mathbf{1}_{0<x<1}  \quad \mbox{ and } \quad f_{\mathrm{GG}(u,v)}(x) = \frac{v}{\Gamma(u/v)}x^{u-1} \mathrm{e}^{-x^{v}} \mathbf{1}_{x >0} .  \end{eqnarray}
\begin{lemma}[Special case of Theorem 1.1 from \cite{pekoz2017joint}]
%
The limit joint distribution of the rescaled degrees of $\enode{1}$ and $\enode{2}$, conditionally on $\enode{3} \sim \enode{2}$ is given by 
\begin{equation}
(\mathbf{D}_1, \mathbf{D}_2) | \emph{\node{3}} \sim \emph{\node{2}} \overset{(d)}{=} (B_1 B_2 Z_3, (1-B_1) B_2 Z_3).
\label{eq:limit_dstri_D1_D2}
\end{equation}
where $B_1 \sim \beta(1,2)$, $B_2 \sim \beta(3,1)$, $Z_3 \sim \mathrm{GG} (5,2)$ and  $B_1$, $B_2$ and $Z_3$ are independent.
\label{lem:limit_degree_distribution}
\end{lemma}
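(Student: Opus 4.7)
The plan is to derive this as a direct specialization of Theorem~1.1 of P\"ekoz--R\"ollin--Ross \cite{pekoz2017joint}, which provides the joint limiting law of the first $k$ renormalized degrees of a preferential attachment tree, conditional on the shape of the tree at step $k$. I would take $k = 3$ and condition on $\{\node{3} \sim \node{2}\}$, under which the tree at step $3$ is the path $\node{1} - \node{2} - \node{3}$ with degrees $(d_1(3), d_2(3), d_3(3)) = (1, 2, 1)$. The parameters in \eqref{eq:limit_dstri_D1_D2} should then be read off from this initial configuration.

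To see intuitively where each factor comes from, I would use the continuous-time Yule embedding: give each vertex an independent exponential clock whose rate equals its current degree, so that a ring produces a new attachment. Then $d_1(t), d_2(t), d_3(t), \dots$ evolve as independent Yule processes started from their initial degrees. The split $\mathbf{D}_1/(\mathbf{D}_1 + \mathbf{D}_2)$ is then the limiting proportion of two independent Yule processes started from $1$ and $2$, which by the classical Polya urn identity is $\beta(1,2)$-distributed, yielding $B_1$. Comparing the combined Yule process for $\{\node{1}, \node{2}\}$ (started from $3$) with the subtree hanging off $\node{3}$ (started from $1$) similarly produces the $\beta(3,1)$-distributed split $B_2$ and a generalized Gamma scale $Z_3 \sim \mathrm{GG}(5,2)$. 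The independence of $B_1, B_2, Z_3$ is inherited from the classical independence of totals and relative splits of independent Gamma random variables.

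The main obstacle is pinning down the parameters of the generalized Gamma factor $Z_3$. The exponent $v = 2$ comes from the square-root scaling in the normalization: since the number of vertices at continuous time $t$ grows like $e^{2t}$, the time $t_n$ at which $N(t_n) = n$ satisfies $e^{t_n} \sim \sqrt{n}$, matching the growth $\alpha_n \sqrt{\pi} \sim 2\sqrt{n}$. The shape $u = 5$ is more delicate and would follow either from the explicit moment computations in \cite{pekoz2017joint}, by matching the moments of $\mathbf{D}_1 + \mathbf{D}_2$ with those of $B_2 Z_3$, or from tracking the martingale $d_i(n)/\alpha_n$ through a Polya-urn coupling; it is this bookkeeping step where the subtle factor of $2$ (coming from the fact that degree is counted on both endpoints of each new edge) enters. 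Once these parameters are confirmed, the joint law \eqref{eq:limit_dstri_D1_D2} follows.
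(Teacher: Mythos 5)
Your proposal is essentially the paper's approach: the lemma is stated as a direct specialization of Theorem~1.1 of P\"ekoz--R\"ollin--Ross to $k=3$ with initial degree profile $(d_1(3),d_2(3),d_3(3))=(1,2,1)$, and the paper offers no further proof beyond this citation. Your Yule-process intuition for the factors is correct in substance --- in the continuous embedding the degrees of the fixed vertices $1,2,3$ evolve as independent Yule processes from $1,2,1$; the split $B_1=\mathbf{D}_1/(\mathbf{D}_1+\mathbf{D}_2)\sim\beta(1,2)$ comes from the Gamma ratio of $\Gamma(1)$ vs.\ $\Gamma(2)$, and $B_2\sim\beta(3,1)$ is the analogous ratio of $(\mathbf{D}_1+\mathbf{D}_2)$ against $\mathbf{D}_3$, with $Z_3=\mathbf{D}_1+\mathbf{D}_2+\mathbf{D}_3\sim\mathrm{GG}(2k-1,2)=\mathrm{GG}(5,2)$ the total. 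One small imprecision: you describe the $\beta(3,1)$ split as a comparison against ``the subtree hanging off $\node{3}$ (started from 1),'' but the relevant comparison is against the \emph{degree} of $\node{3}$ (a Yule process started from $1$), not the size of its subtree; in the degree picture the remainder $(1-B_2)Z_3$ is exactly $\mathbf{D}_3$, which is the correct reading of the P\"ekoz--R\"ollin--Ross decomposition.
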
 
Note that the roles of $\node{1}$ and $\node{2}$ are exchangeable thus, this lemma also provides the distribution of $ \mathbf{D}_1$ and $ \mathbf{D}_2$ conditionally on $\node{3} \sim \node{1}$. 
\begin{lemma}There exists $ \mathrm{Cst}>0$ so that for all $a>b \geq 0$ and $ \varepsilon \in (0,1)$ we have \label{lem:control_of_the_limit_degree}
\begin{eqnarray} 
\proba{}{\mathbf{D}_1 \leq \varepsilon^a \quad \text{and} \quad \mathbf{D}_2 \leq \varepsilon^b \mid  \enode{3} \sim \enode{2} } &\leq& \mathrm{Cst} \cdot \varepsilon^{a+2b} 
\label{eq:control_adam_eve_limit_degree}\\
\proba{}{\mathbf{D}_1 \leq \varepsilon \mid  \enode{3} \sim \enode{2} } &\leq& \mathrm{Cst} \cdot \varepsilon   \label{eq:control_adam_toujours}\\
\proba{}{\mathbf{D}_2 \leq \varepsilon^a \quad \text{and} \quad \mathbf{D}_1 \leq \varepsilon^b \mid  \enode{3} \sim \enode{2} } &\leq& \mathrm{Cst} \cdot \varepsilon^{2a+b} 
\label{eq:control_adam_eve_limit_degree_inverse}\\
\proba{}{\mathbf{D}_2 \leq \sqrt{\varepsilon} \mid  \enode{3} \sim \enode{2} } &\leq& \mathrm{Cst} \cdot \varepsilon.
\label{eq:control_eve_toujours}
\end{eqnarray}

\end{lemma}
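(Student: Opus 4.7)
The plan is to exploit the explicit product structure provided by Lemma~\ref{lem:limit_degree_distribution}. Setting $Y := B_2 Z_3$, which is independent of $B_1$, the identity \eqref{eq:limit_dstri_D1_D2} yields, conditionally on $\node{3} \sim \node{2}$,
\[
\mathbf{D}_1 \overset{(d)}{=} B_1 Y, \qquad \mathbf{D}_2 \overset{(d)}{=} (1-B_1) Y.
\]
All four bounds will then follow from tail estimates on $Y$ combined with the Beta-tail behavior of $B_1$ and $1-B_1$.

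First, I would read off from \eqref{eq:densities} the elementary small-$t$ bounds, valid for $t \in (0,1]$:
\[
\mathbb{P}(B_1 \leq t) \leq 2t, \qquad \mathbb{P}(1-B_1 \leq t) = t^2, \qquad \mathbb{P}(B_2 \leq t) = t^3,
\]
together with $\mathbb{P}(Z_3 \leq t) \leq \mathrm{Cst} \cdot t^5$. Conditioning on $B_2$ and changing variables $u = y/B_2$ gives the exact formula
\[
\mathbb{P}(Y \leq y) = 3 y^3 \int_y^\infty \frac{F_{Z_3}(u)}{u^4}\, du,
\]
and the integral $\int_0^\infty F_{Z_3}(u)\, u^{-4}\, du$ converges thanks to $F_{Z_3}(u) = O(u^5)$ near $0$ and $F_{Z_3} \leq 1$ at infinity. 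Differentiating and bounding the two resulting terms leads to the universal estimates
\[
\mathbb{P}(Y \leq y) \leq \mathrm{Cst} \cdot y^3 \qquad \text{and} \qquad f_Y(y) \leq \mathrm{Cst} \cdot y^2 \qquad \text{for } y \in (0,1].
\]

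With these in hand, each inequality reduces to the same splitting trick. For \eqref{eq:control_adam_eve_limit_degree}, the event $\{B_1 Y \leq \varepsilon^a\} \cap \{(1-B_1) Y \leq \varepsilon^b\}$ forces $Y \leq 2\varepsilon^b$ (sum the two constraints and use $a > b$), and on this event the constraint $B_1 \leq \varepsilon^a/Y$ has conditional probability at most $(2\varepsilon^a/Y) \wedge 1$ given $Y$. Integrating against $f_Y$ and splitting at $y = 2\varepsilon^a$,
\[
\mathbb{P}\bigl( \mathbf{D}_1 \leq \varepsilon^a,\ \mathbf{D}_2 \leq \varepsilon^b \mid \node{3} \sim \node{2} \bigr) \leq \mathbb{P}(Y \leq 2\varepsilon^a) + 2\varepsilon^a \int_{2\varepsilon^a}^{2\varepsilon^b} \frac{f_Y(y)}{y}\, dy \leq \mathrm{Cst} \cdot \bigl(\varepsilon^{3a} + \varepsilon^{a+2b}\bigr),
\]
and the second term dominates precisely because $a > b$. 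Inequality \eqref{eq:control_adam_eve_limit_degree_inverse} is entirely analogous, except that the quadratic Beta-tail $\mathbb{P}(1-B_1 \leq t) \leq t^2$ replaces the linear one and produces the exponent $2a + b$ after splitting at $y = \varepsilon^a$. Finally, \eqref{eq:control_adam_toujours} and \eqref{eq:control_eve_toujours} are the $b = 0$ specialisations ($a = 1$ and $a = 1/2$ respectively) of these two bounds, so no separate argument is needed.

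The only mild technical point is to verify that the density bound $f_Y(y) \leq \mathrm{Cst} \cdot y^2$ indeed holds with a universal constant, which is precisely the content of the integrability of $F_{Z_3}(u)/u^4$ on $(0,\infty)$ noted above; everything else is routine manipulation of elementary densities.
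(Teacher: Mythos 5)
Your approach is fundamentally the same as the paper's: isolate $Y := B_2 Z_3$, observe that the two constraints force $Y \leq 2\varepsilon^b$, and then integrate the conditional Beta tail against the distribution of $Y$. The paper simply carries out the triple integral directly, whereas you package the $B_2 Z_3$ computation into two universal estimates ($\mathbb{P}(Y\le y)\le \mathrm{Cst}\cdot y^3$ and $f_Y(y)\le \mathrm{Cst}\cdot y^2$) before applying them. Your derivation of those estimates via $\mathbb{P}(Y\le y)=3y^3\int_y^\infty F_{Z_3}(u)u^{-4}\,du$ is correct, and the split at $y=2\varepsilon^a$ (resp.\ $y=\varepsilon^a$) combined with $a>b$ does give $\varepsilon^{3a}\le \varepsilon^{a+2b}$, so both two-parameter bounds check out.

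There is, however, a genuine gap in the final sentence. Setting $b=0$ in \eqref{eq:control_adam_eve_limit_degree} only controls $\mathbb{P}(\mathbf{D}_1\le\varepsilon,\ \mathbf{D}_2\le 1\mid \enode{3}\sim\enode{2})$, which is strictly smaller than the one-sided event in \eqref{eq:control_adam_toujours}; in your argument the reduction to $Y\le 2\varepsilon^b$ uses the second constraint in an essential way, so at $b=0$ you only cover the region $Y\le 2$. The paper sidesteps this by proving the joint bound for all $a>b>-\infty$ and letting $\varepsilon^b\uparrow\infty$, so the second constraint becomes vacuous while the bound degenerates to $\mathrm{Cst}\cdot\varepsilon^a$. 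To close the gap with your machinery you need one extra line: either observe that $\mathbb{E}[Y^{-1}]<\infty$ (your density bound near $0$ plus integrability of $f_Y$ at infinity), so that $\mathbb{P}(B_1Y\le\varepsilon)\le\mathbb{P}(Y\le 2\varepsilon)+2\varepsilon\,\mathbb{E}[Y^{-1}]\le\mathrm{Cst}\cdot\varepsilon$, or simply note that on $\{Y>1\}$ one has $B_1\le\varepsilon/Y<\varepsilon$, giving $\mathbb{P}(\mathbf{D}_1\le\varepsilon,\,Y>1)\le 2\varepsilon$, and similarly for \eqref{eq:control_eve_toujours} with the quadratic Beta tail. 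As written, the claim ``no separate argument is needed'' is not justified.
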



\begin{proof}
Using \eqref{eq:limit_dstri_D1_D2}, for $a  > b > -\infty$ we have 
\begin{eqnarray*}
&\ &\proba{}{\mathbf{D}_1 \leq \varepsilon^a \quad \text{and} \quad \mathbf{D}_2 \leq \varepsilon^b \mid  \node{3} \sim \node{2} }\\
& = &  \mathbb{P} \left( B_1B_2Z_3 \leq {\varepsilon}^a \mbox{ and } (1- B_1)B_2Z_3 \leq {\varepsilon}^b\right) \\
& \leq&  \mathbb{P} \left( B_2 Z_3 \leq 2\varepsilon^b \mbox{ and } B_1 \leq \frac{ \varepsilon^a}{B_2 Z_3}\right) + \mathbb{P} \left(B_2Z_3 > 2 \varepsilon^b \mbox{ and } B_1<\frac{1}{2} \mbox{ and } (1-B_1) < \frac{1}{2} \right)\\
&= &  \mathbb{P} \left( B_2 Z_3 \leq 2\varepsilon^b \mbox{ and } B_1 \leq \frac{ \varepsilon^a}{B_2 Z_3}\right).
\end{eqnarray*}
 To compute this probability, we use the fact that $ \mathbb{P}(B_1 \leq x) = 2x - x^2 \leq 2x$. This gives 
 \begin{eqnarray*}
&\ & \int_{0}^{1} \mathrm{d}x \int_{0}^{1} \mathrm{d}y \int_{0}^{ \infty} \mathrm{d}z 2(1-x) \cdot 3 y^2 \cdot \frac{2}{ \Gamma(5/2)} z^4 \mathrm{e}^{-z^2} \mathds{1}_{y z \leq 2\varepsilon^b} \mathds{1}_{x y z \leq \varepsilon^a} \\
&=& \int_{0}^{1} \mathrm{d}y \int_{0}^{ \infty} \mathrm{d}z\ \frac{6}{ \Gamma(5/2)} y^2 z^4 \mathrm{e}^{-z^2} \mathds{1}_{ y z \leq 2\varepsilon^b} \left( \int_{0}^{1} \mathrm{d}x\  2(1-x)\mathds{1}_{x \leq \frac{\varepsilon^a}{yz}}\right) \\
&\leq& \int_{0}^{1} \mathrm{d}y \int_{0}^{ \infty} \mathrm{d}z \  \frac{6}{ \Gamma(5/2)} y^2 z^4 \mathrm{e}^{-z^2} \mathds{1}_{y z \leq 2\varepsilon^b} \cdot 2 \frac{\varepsilon^a}{yz}  \\
&=&  2\varepsilon^a \int_{0}^{1} \mathrm{d}y \int_{0}^{ \infty} \mathrm{d}z \ \frac{6}{ \Gamma(5/2)} y  z^3 \mathrm{e}^{-z^2} \mathds{1}_{y z \leq 2\varepsilon^b}  \\
&=& \mathrm{Cst} \cdot  \varepsilon^{a}\int_{0}^{ \infty} z^3 \mathrm{e}^{-z^2}\mathrm{d}z \  \int_{0}^{1}  y   \mathds{1}_{y z \leq 2\varepsilon^b}\mathrm{d}y\\
&\leq& \mathrm{Cst} \cdot  \varepsilon^{a}\int_{0}^{ \infty} z^3 \mathrm{e}^{-z^2}\mathrm{d}z \times \left(\Bigg(\frac{2\varepsilon^b}{z}\Bigg)^2 \wedge 1 \right) \leq 
 \mathrm{Cst} \cdot  \varepsilon^{a} (1 \wedge \varepsilon^{2b}).
\end{eqnarray*}
The first two lines~\eqref{eq:control_adam_eve_limit_degree} and~\eqref{eq:control_adam_toujours} of the lemma follow.
Similarly, partitioning on the value of $B_2 Z_3$ with respect to $2 \varepsilon^b$, we can obtain 
\begin{equation*}
\proba{}{\mathbf{D}_2 \leq \varepsilon^a \quad \text{and} \quad \mathbf{D}_1 \leq \varepsilon^b \mid  \node{3} \sim \node{2} }
\leq   \mathbb{P} \left( B_2 Z_3 \leq 2\varepsilon^b \mbox{ and } (1-B_1) \leq \frac{ \varepsilon^a}{B_2 Z_3}\right).
\end{equation*}
Using the fact that $ \mathbb{P} (1-B_1 \leq x) = x^2$, the computation of this probability is done along the same lines and gives the last two inequalities.

\end{proof}


\subsection{Adam is in the packet}


We now prove Proposition~\ref{prop:adam_in_the_packet}. Let us first compute the probability that Adam is not eventually in the $ \varepsilon$-packet, that is
\begin{eqnarray*}
 1-\proba{}{\node{1} \in \mathcal{P}_ \varepsilon(n) \mbox{ for all }n \mbox{ large enough}}  &\underset{ \eqref{eq:degreelimite}}{\leq}& \ \proba{}{ \mathbf{D}_1 \mathbf{D}_2^2 \leq \varepsilon \mbox{ and }  \mathbf{D}_1^2  \mathbf{D}_2 \leq \varepsilon }\\
&\underset{ \mathrm{symmetry}}{=} & \ \proba{}{ \mathbf{D}_1  \mathbf{D}_2^2 \leq \varepsilon  \mbox{ and }  \mathbf{D}_1^2  \mathbf{D}_2 \leq \varepsilon  | \node{3} \sim \node{2}} 
\end{eqnarray*}

We now partition depending on the position of $ \mathbf{D}_1$ with respect  to $\varepsilon$ and 1: \begin{eqnarray*}
& & \hspace{-1.5cm}\proba{}{ \mathbf{D}_1  \mathbf{D}_2^2 \leq \varepsilon  \mbox{ and }  \mathbf{D}_1^2  \mathbf{D}_2 \leq \varepsilon  | \node{3} \sim \node{2}} \\
&\leq & \mathbb{P}(\mathbf{D}_1 \leq \varepsilon| \node{3} \sim \node{2})   \\
& &+\  \proba{}{\mathbf{D}_1 \mathbf{D}_2^2 \leq \varepsilon  \mbox{ and } \mathbf{D}_1^2 \mathbf{D}_2 \leq \varepsilon \mbox{ and } \mathbf{D}_1 \geq 1 | \node{3} \sim \node{2}} \\
& &+ \ \proba{}{\mathbf{D}_1 \mathbf{D}_2^2 \leq \varepsilon  \mbox{ and } \mathbf{D}_1^2 \mathbf{D}_2 \leq \varepsilon \mbox{ and }  \varepsilon \leq \mathbf{D}_1 \leq 1 | \node{3} \sim \node{2}} \\
&\leq & \mathbb{P}(\mathbf{D}_1 \leq \varepsilon| \node{3} \sim \node{2}) \\
& &+ \ \mathbb{P}( \mathbf{D}_2 \leq  \sqrt{\varepsilon}| \node{3} \sim \node{2}) \\
& &+ \  \proba{}{\mathbf{D}_1 \mathbf{D}_2^2 \leq \varepsilon  \mbox{ and } \mathbf{D}_1^2 \mathbf{D}_2 \leq \varepsilon  \mbox{ and }  \varepsilon \leq \mathbf{D}_1 \leq 1 | \node{3} \sim \node{2}} . 
\end{eqnarray*}
The first two terms are easily  handled using \eqref{eq:control_adam_toujours} and \eqref{eq:control_eve_toujours} and are $O( \varepsilon)$. Let us focus now on the last term. 
We  decompose the event $\{ \varepsilon \leq  \mathbf{D}_1 \leq1\}$ in a finite union depending on the order of magnitude of $\mathbf{D}_1$, via 
\begin{equation*}
\{ \varepsilon < \mathbf{D}_1 \leq  1 \} \subseteq \bigcup\limits_{a \in \mathcal{A}} \{ \varepsilon^a < \mathbf{D}_1 \leq 2 \varepsilon^a \},
\end{equation*}
where $\mathcal{A}$ is a finite subset of $[0,1]$. This decomposition can be obtained by discretizing the segment $[0, 1]$ with steps of size $\log_2(1/\varepsilon)^{-1}$, so the cardinality of $\mathcal{A}$ satisfies $|\mathcal{A}|\leq \log_2(1/\varepsilon)+1$.  For any $a \in \mathcal{A}$ we have
\begin{equation*}
\left\{ \mathbf{D}_1 \mathbf{D}_2^2 \leq \varepsilon \mbox{ and } \mathbf{D}_1^2 \mathbf{D}_2 \leq \varepsilon  \right\} \subseteq \left\{ \mathbf{D}_1 \leq \varepsilon^a \quad \textnormal{or} \quad \left( \mathbf{D}_2 \leq \varepsilon^{\frac{1-a}{2}} \mbox{ and } \mathbf{D}_2 \leq \varepsilon^{1-2a} \right)  \right\}.
\end{equation*}
Thus, 
\begin{equation*}
\left\{ \mathbf{D}_1 \mathbf{D}_2^2 \leq \varepsilon  \mbox{ and } \mathbf{D}_1^2 \mathbf{D}_2 \leq \varepsilon \right\} \cap \left\{ \varepsilon^a < \mathbf{D}_1 \leq  2 \varepsilon^a \right\} \subseteq \left\{ \mathbf{D}_1 \leq 2 \varepsilon^a  \textnormal{ and }  \mathbf{D}_2 \leq \varepsilon^{\frac{1-a}{2}} \textnormal{and } \mathbf{D}_2 \leq \varepsilon^{1-2a}\right\}.
\end{equation*}
By taking the union over all $a$ in $\mathcal{A}$, we get that
\begin{equation*}
\{ \varepsilon < \mathbf{D}_1 \leq 1 \textnormal{ and }   \mathbf{D}_1 \mathbf{D}_2^2 \leq \varepsilon  \mbox{ and } \mathbf{D}_1^2 \mathbf{D}_2 \leq \varepsilon \} \subseteq \bigcup\limits_{a \in \mathcal{A}} \left\{ \mathbf{D}_1 \leq 2 \varepsilon^a \textnormal{ and }  \mathbf{D}_2 \leq \varepsilon^{\frac{1-a}{2}}  \textnormal{ and }  \mathbf{D}_2 \leq \varepsilon^{1-2a}\right\},
\end{equation*}
which yields, after applying a union bound and (\ref{eq:control_adam_eve_limit_degree}) if $2\varepsilon^a < \varepsilon^{\frac{1-a}{2}}$ or \eqref{eq:control_adam_eve_limit_degree_inverse} otherwise,
\begin{equation*}
 \proba{}{ \varepsilon < \mathbf{D}_1 \leq 1  \textnormal{ and }    \mathbf{D}_1 \mathbf{D}_2^2 \leq \varepsilon \textnormal{ and }  \mathbf{D}_1^2 \mathbf{D}_2 \leq \varepsilon | \node{3} \sim \node{2}} \leq  \mathrm{Cst} \cdot  \ \log_2(1/\varepsilon)  \cdot \varepsilon.
\end{equation*}
Gathering up the pieces, we indeed see that for every $ \eta>0$ we have for $\varepsilon >0$ small enough    \begin{eqnarray} \label{eq:limitfirst}\proba{}{ \mathbf{D}_1  \mathbf{D}_2^2 \leq \varepsilon  \mbox{ and }  \mathbf{D}_1^2  \mathbf{D}_2 \leq \varepsilon  | \node{3} \sim \node{2}} \leq  \varepsilon^{1-\eta}.  \end{eqnarray} Let us now make this uniform in $n$. For this we use the Markov property and a stopping time argument. More precisely, let $$\theta= \inf \{ n \geq 1 : D_{1}(n) \cdot (D_{2}(n))^{2} \leq  \varepsilon \mbox{ or }D_{2}(n) \cdot (D_{1}(n))^{2} \leq \varepsilon \}.$$
Notice that $\theta < \infty$ if and only of Adam (and Eve) is not in the $ \varepsilon$-packet for some $n$. Recall that conditionally on the past before $\theta$, the processes $(D_{1}(n) : n \geq \theta)$ and $(D_{2}(n) : n \geq \theta)$ are positive martingales starting respectively from $D_{1}(\theta)$ and $D_{2}(\theta)$. By Doob's inequality (for positive supermartingales), we have 
$$ \mathbb{P}\left( \sup_{n \geq \theta} D_{1}(n) \geq 3 \cdot D_{1}(\theta)\ \Bigg| \ {\theta}< \infty\right) \leq \frac{1}{3}.$$ Using a similar inequality for $D_{2}(\cdot)$ we deduce that conditionally on $\theta <\infty$, there is a probability at least $1- \frac{1}{3} - \frac{1}{3} = \frac{1}{3}$ that $D_{1}(n) \leq 3 D_{1}(\theta)$ and $D_{2}(n) \leq 3 D_{2}(\theta)$ for all $n \geq \theta$ and in particular (recall  \eqref{eq:degreelimite}) that $ \mathbf{D}_{1} \leq 3 D_{1}(\theta) $ and $ \mathbf{D}_{2} \leq 3  D_{2}(\theta)$. It follows that 
$$ \mathbb{P}( \theta < \infty) \cdot \frac{1}{3} \leq \mathbb{P}( \mathbf{D}_{1}  \mathbf{D}_{2}^{2} \leq 3^{3} \varepsilon \mbox{ or } \mathbf{D}_{2}  \mathbf{D}_{1}^{2} \leq 3^{3} \varepsilon).$$
Combined with the previous estimation \eqref{eq:limitfirst}, this completes the proof of Proposition~\ref{prop:adam_in_the_packet}.

\section{The $ \varepsilon$-packet has size $ \varepsilon^{-1+o(1)}$} \label{sec:size}
The  goal  of this section is to estimate the size of the $\epsilon$-packet $ \mathcal{P}_ \varepsilon(n)$ and prove the following proposition, which together with  Proposition~\ref{prop:adam_in_the_packet}, completes the proof of Theorem \ref{theo:main_result}.
 \begin{proposition}[Size of the $ \varepsilon$-packet] \label{prop:size_packet}
  Let $ \eta \in (0,1/8)$ be fixed. Then, for all $ \varepsilon >0$ small enough, we have
  \begin{eqnarray*} \mathbb{P} \left( \sup_{n \geq 1} |\mathcal{P}_{ \varepsilon}(n)| \leq \varepsilon^{-1- \eta} \right) \geq 1 - \varepsilon^{1- \eta}. \end{eqnarray*}
 \end{proposition}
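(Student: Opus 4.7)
The plan is to bound $|\mathcal{P}_\varepsilon(n)|$ by twice the number of edges $\{\node{i},\node{j}\}$ of $ \mathcal{T}(n)$ satisfying the packet condition $\max(D_i(n), D_j(n))^2\cdot\min(D_i(n), D_j(n)) > \varepsilon$, and then to control this edge count via a dyadic decomposition along the larger endpoint degree. Uniformity in $n$ will be obtained by a Doob stopping time argument analogous to the one at the end of Section~\ref{sec:into}.

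Denoting by $M$ and $m$ the max and min of the two endpoint renormalized degrees, the packet condition forces $M > \varepsilon^{1/3}$ and $m > \varepsilon/M^2$. For $k\geq 0$ I would set $s_k := 2^k\varepsilon^{1/3}$ and introduce $S_k(n) := \{\node{v}: D_v(n) \in [s_k, s_{k+1})\}$ as well as $E^{(k)}(n)$ the set of edges in the packet with $M\in[s_k,s_{k+1})$. Each edge of $E^{(k)}(n)$ has one endpoint in $S_k(n)$ and the other endpoint with $D \geq \varepsilon/s_{k+1}^2$. By the classical tail bounds on the empirical degree distribution of the preferential attachment tree \cite{mori2005maximum, banerjee2021degree}, one has $|S_k(n)| \leq C\, s_k^{-2}$ with a polynomially small failure probability, uniformly in $n$; in particular $S_k(n) = \emptyset$ for $k$ larger than a constant multiple of $\log_2(1/\varepsilon)$, thanks to the known tail of the maximal degree.

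The main additional ingredient is the local estimate that for any $\node{v} \in S_k(n)$, the number of neighbors $\node{u}$ with $D_u(n) \geq \delta$ is of order $D_v(n)/\delta$ (up to polylogarithmic factors and with suitable polynomial tail). The heuristic is that a neighbor with large renormalized degree must have small label, and the expected number of children of $\node{v}$ with label at most $T$ scales like $D_v\,\sqrt{T}$ by the preferential attachment dynamics; choosing $T = 1/\delta^2$ yields the claim. Rigorously this would be extracted from the martingale representation of $(D_v(n))_{n\geq v}$, combined with joint distributions of the limit degrees of neighboring vertices extending Lemma~\ref{lem:limit_degree_distribution}. Granted this bound, each $\node{v} \in S_k(n)$ contributes at most $O(s_k/\delta_k) = O(8^k)$ edges to $E^{(k)}(n)$, where $\delta_k \asymp 4^{-k}\varepsilon^{1/3}$, so that
$$|E^{(k)}(n)| \lesssim |S_k(n)|\cdot 8^k \lesssim 4^{-k}\varepsilon^{-2/3}\cdot 8^k = 2^k\,\varepsilon^{-2/3},$$
and summing over the $O(\log(1/\varepsilon))$ relevant scales yields $|E_\varepsilon(n)| \lesssim \varepsilon^{-1}$ up to logarithmic factors.

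To promote this expected size bound into the required tail $\varepsilon^{1-\eta}$, I would combine polynomial tail estimates on the $|S_k(n)|$ with a polynomial tail on the local neighbor count, absorbing the $\log(1/\varepsilon)$ slack into $\varepsilon^{-\eta}$. Uniformity in $n$ would then be obtained exactly as in Section~\ref{sec:into}: defining $\tau := \inf\{n : |\mathcal{P}_\varepsilon(n)| > \varepsilon^{-1-\eta}\}$ and applying Doob's inequality to the positive martingales $(D_v(n))_n$ for the $O(\varepsilon^{-1-\eta})$ vertices witnessing the large packet at time $\tau$, the degrees stay within a constant factor for all $n\geq \tau$ with a universal positive probability, yielding $\mathbb{P}(\tau<\infty) \lesssim \varepsilon^{1-\eta}$. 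The main obstacle is the quantitative neighbor estimate: contrary to the pair (Adam, Eve), no explicit joint distribution of $(\mathbf{D}_v, \mathbf{D}_u)$ for a generic pair of neighbors is directly available from \cite{pekoz2017joint}, so this bound has to be derived from the martingale and preferential attachment dynamics with enough tail strength to withstand summation over the $O(\varepsilon^{-2/3})$ big vertices at every scale.
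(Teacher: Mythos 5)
Your dyadic decomposition on the larger endpoint's degree is a reasonable alternative scheme, and the arithmetic ($|E^{(k)}(n)| \lesssim 2^k \varepsilon^{-2/3}$, summing to $\varepsilon^{-1}$ up to polylogs) does check out; but as you yourself flag, the whole argument hinges on a quantitative ``local neighbor'' estimate — that a vertex with renormalized degree $D_v$ has $O(D_v/\delta)$ neighbors of renormalized degree at least $\delta$, with polynomial tails strong enough to survive a union bound over $\sim \varepsilon^{-2/3}$ vertices at each scale and over $n$. That estimate is a genuine gap: it is not available from \cite{pekoz2017joint}, it is not a consequence of the single-vertex martingale bound, and nothing in your sketch closes it. Making it rigorous would require tracking the joint law of a vertex's degree and the labels (hence degrees) of all its children uniformly in time, which is essentially as hard as the original problem.

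The paper sidesteps this obstruction. It first upgrades Lemma~\ref{lem:dev_degree} to a uniform a priori bound: on a ``Good'' event of probability at least $1 - \mathrm{Cst}\cdot\varepsilon$ one has $\sup_{n\geq i} D_i(n) \leq (i/\varepsilon)^{3\eta/2}/\sqrt{i}$ simultaneously for all $i$. On Good, the degree-based packet condition $D_i D_j^2 > \varepsilon$ or $D_j D_i^2 > \varepsilon$ (with $i<j$) implies a purely label-based condition $i\sqrt{j} \leq \varepsilon^{-1-\eta'}$. The combinatorics then collapse, because each $\node{j}$ has exactly one neighbor with smaller label (its parent), so the packet size is bounded by $2\sum_j X_j$ where $X_j$ is the indicator that $\node{j}$'s parent has label $\leq \varepsilon^{-1-\eta'}/\sqrt{j}$. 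Conditionally on $\mathcal{F}_{j-1}$, $X_j$ is Bernoulli with a parameter that on Good is deterministically bounded by an explicit $p_j \lesssim (\varepsilon^{-1/2}/j^{3/4})^{1+\eta''}$, and $\sum_j X_j$ is stochastically dominated by $\varepsilon^{-5/6}$ plus a Poisson variable of mean $\lesssim \varepsilon^{-1-\eta'''}$; Bennett's inequality finishes. In effect, the ``number of high-degree neighbors of a high-degree vertex'' you need is replaced by the trivial fact that each vertex has exactly one parent, with all the randomness absorbed into the $p_j$. Note also that this route gives uniformity in $n$ for free, via the $\sup_n$ built into the Good event, whereas your stopping-time argument would have to control $\sim\varepsilon^{-1-\eta}$ martingales simultaneously.
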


For this purpose, we first establish uniform estimates on the renormalized degrees, so that heuristically $ D_i(n) \preceq \frac{1}{ \sqrt{i}}$ for all $n,i \geq 1$. Under this simplification, the $ \varepsilon$-packet is thus made of the pairs $(i,j)$ so that $  i \sqrt{j} \preceq \varepsilon^{-1}$ and $ \node{i} \sim \node{j}$. The number of such pairs is then easily estimated using concentration arguments.
 
\subsection{Uniform estimates on degrees}
Remember from \eqref{eq:defrenormalizeddegrees} and  \eqref{eq:degreelimite} that for $i \geq 1$, $d_{i}(n)$ denotes the degree of $\node{i}$ in $ \mathcal{T}(n)$ and $D_i(n)$ denotes its renormalized version which converges to $  \mathbf{D}_{i}$. We use again the explicit distribution of the limiting degrees derived by Pekös, Röllin and Ross:

\begin{lemma}[Another special case of Theorem 1.1 from \cite{pekoz2017joint}]\label{lem:lawdi} Suppose that $\emph{\node{i}}$ has degree $ m \in \{1,2,\ldots\}$ at time $k \geq i$. Then, its limit degree has the following explicit conditional distribution:
$$ \left(D_{i}(n) | d_{i}(k) = m \right) \xrightarrow[n\to\infty]{a.s.} (1-B_{(m, k)}) Z_{k} $$
where $ B_{(m, k)} \sim \beta(2(k-1)- m, m)$ and $Z_{k} \sim \mathrm{GG} (2k-1,2)$. Furthermore,  the variables $B_{(m, k)}$ and $Z_{k}$ are independent. In particular, for $k=i$ and $m=1$ we deduce that $ \mathbf{D}_i$ has the same distribution as $Z_i(1-B_{i-1})$ where $B_{i-1} \sim \mathrm{\beta} (2i-3,1)$ and $Z_i \sim \mathrm{GG}(2i-1,2)$. 
\end{lemma}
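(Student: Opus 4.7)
Since the lemma is announced as a special case of Theorem~1.1 of \cite{pekoz2017joint}, one approach is simply to specialize that theorem (which gives the joint limit of any finite collection of $(D_{i_j}(n))$ conditioned on the tree at time $k$). To prove it from scratch, I would use the Athreya--Karlin exponential-clock embedding of the Barab\'asi--Albert tree.

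Attach to each half-edge an independent $\mathrm{Exp}(1)$ clock; when a clock rings, a new vertex arrives and attaches to the vertex owning the ringing half-edge, creating two new half-edges. Writing $\tau_n$ for the arrival time of the $n$-th vertex, $\mathcal{T}(n)$ coincides with the continuous-time tree at time $\tau_n$. Conditionally on $d_i(\tau_k) = m$, two processes are easy to analyze: $s\mapsto d_i(\tau_k+s)$ is a Yule process of unit rate per individual starting at $m$, and $s\mapsto N(\tau_k+s)-1$ is a Yule process of rate $2$ starting at $k-1$. Classical Yule limit theory yields
\begin{equation*}
e^{-s}\, d_i(\tau_k+s) \xrightarrow[s\to\infty]{\text{a.s.}} W \sim \mathrm{Gamma}(m,1), \qquad e^{-2s}\bigl(N(\tau_k+s)-1\bigr) \xrightarrow[s\to\infty]{\text{a.s.}} \Xi.
\end{equation*}
Inverting $N(\tau_n)=n$ through $e^{\tau_n-\tau_k}\sim\sqrt{(n-1)/\Xi}$ and using $\alpha_n\sim 2\sqrt{n}/\sqrt{\pi}$ then gives the almost-sure limit
\begin{equation*}
D_i(n) = \frac{d_i(n)}{\alpha_n\sqrt{\pi}} \xrightarrow[n\to\infty]{\text{a.s.}} \frac{W}{2\sqrt{\Xi}}.
\end{equation*}

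The delicate step is to identify the \emph{joint} law of $(W,\Xi)$: it is not a product of the marginals, since each firing of \node{i}'s half-edge also increments $N$, coupling the two Yule processes. This can be resolved by viewing the half-edges as a two-type continuous-time branching process (types ``\node{i}-half-edge'' and ``other'', with initial sizes $m$ and $2(k-1)-m$) with mean offspring matrix $\left(\begin{smallmatrix}1 & 1 \\ 0 & 2\end{smallmatrix}\right)$, whose eigenvalues are $\lambda_1=2$ (dominant, governing $\Xi$) and $\lambda_2=1$ (subdominant, governing $W$). The joint CTBP / triangular Polya urn limit theorems (the substance of Theorem~1.1 of \cite{pekoz2017joint}, or of Janson's analysis of triangular replacement urns) then identify $W/(2\sqrt{\Xi})$ with the product $(1-B_{(m,k)})\,Z_k$ where $B_{(m,k)}\sim\beta(2(k-1)-m,m)$ and $Z_k\sim\mathrm{GG}(2k-1,2)$ are independent; the ``in particular'' clause is recovered by setting $k=i$ and $m=1$. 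The main obstacle is precisely this joint identification: although the marginals of $W$ and $\Xi$ are elementary, disentangling their dependence into an independent Beta--generalized-Gamma product requires the full two-type branching-process moment computation underlying \cite{pekoz2017joint}.
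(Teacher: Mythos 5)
The paper offers no proof of this lemma at all: it is stated as a direct specialization of Theorem~1.1 of \cite{pekoz2017joint}, so your opening sentence --- ``simply specialize that theorem'' --- is exactly what the authors do, and in that sense your proposal matches the paper. Your additional sketch via the exponential embedding is a correct outline of the mechanism behind that theorem: the rate-one Yule process for $d_i$ started from $m$, the rate-two Yule process for $N-1$ started from $k-1$, and the time inversion $e^{\tau_n-\tau_k}\sim\sqrt{(n-1)/\Xi}$ all check out, and the resulting limit $W/(2\sqrt{\Xi})$ is consistent with the paper's normalization (for $k=i$, $m=1$ it reproduces $\mathbb{E}[\mathbf{D}_i]\sim 1/(2\sqrt{i})$, as noted after the lemma). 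However, as you acknowledge yourself, the identification of the joint law of $(W,\Xi)$ --- disentangling the dependence created by the fact that each birth to vertex $i$ also increments the total population --- into the independent product $(1-B_{(m,k)})Z_k$ with $1-B_{(m,k)}\sim\beta(m,2(k-1)-m)$ and $Z_k\sim\mathrm{GG}(2k-1,2)$ is the entire content of the cited theorem, and your sketch defers it rather than carrying it out. So as a standalone proof the proposal is incomplete at precisely the decisive step; but relative to what the paper actually supplies (a bare citation), it is not missing anything, and the surrounding reasoning you do provide is sound.
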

Note that $\expect{}{\bf{D}_i} \sim \frac{1}{2\sqrt{i}}$ as $ i \to \infty$. The following lemma roughly states that it is very unlikely that $D_{i}(n)$ deviates from above of  its typical order of magnitude $ \frac{1}{ \sqrt{i}}$, and this uniformly in $i,n \geq 1$.
\begin{lemma}[Upper deviations for the degrees]\label{lem:dev_degree} There exists $ \mathrm{Cst}>0$ so that for every $i \geq 1$ and all $A >8$ we have 

	$$\P\Big( \sup_{n \geq i}  D_i(n) \geq \frac{A}{\sqrt{i}}\Big)\leq  \mathrm{Cst}\cdot \exp\left( -\frac{A^{2/3}}{ \mathrm{Cst}}\right).$$
\end{lemma}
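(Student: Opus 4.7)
The plan is to apply Doob's $L^p$ maximal inequality to the nonnegative martingale $(D_i(n))_{n\geq i}$, combined with an explicit moment computation for $\mathbf{D}_i$ coming from Lemma~\ref{lem:lawdi}, and to conclude by optimizing the exponent $p$ in terms of $A$.

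First I would compute the moments of the limit $\mathbf{D}_i$. Lemma~\ref{lem:lawdi} (applied with $k=i$, $m=1$ for $i \geq 2$; the case $i=1$ follows similarly by conditioning on $d_1(2)=1$, which holds almost surely) gives $\mathbf{D}_i \overset{(d)}{=} (1-B_{i-1})Z_i$ with $B_{i-1} \sim \beta(2i-3,1)$ and $Z_i \sim \mathrm{GG}(2i-1,2)$ independent. Direct integration of the densities in~\eqref{eq:densities} yields, for every integer $p \geq 1$,
\begin{equation*}
\mathbb{E}\!\left[\mathbf{D}_i^p\right] \;=\; \frac{\Gamma((p+2i-1)/2)}{\Gamma((2i-1)/2)} \cdot \frac{\Gamma(p+1)\,\Gamma(2i-2)}{\Gamma(p+2i-2)}.
\end{equation*}
Elementary Gamma-ratio bounds---namely $\Gamma(x+a)/\Gamma(x) \leq (x+a)^a$ for the first factor, together with $\Gamma(p+2i-2)/\Gamma(2i-2) \geq (2i-2)^p$ when $p \leq 2i$, crude Stirling covering the remaining regime---then produce a uniform estimate of the form
\begin{equation*}
\mathbb{E}\!\left[\mathbf{D}_i^p\right] \;\leq\; \frac{\mathrm{Cst}^p\cdot p!}{i^{p/2}}, \qquad i \geq 1,\ p \geq 1.
\end{equation*}

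Next I would pass from the limit $\mathbf{D}_i$ to the supremum $\sup_{n \geq i} D_i(n)$ via Doob's maximal inequality. Because $(D_i(n))_{n\geq i}$ is a nonnegative martingale, $(D_i(n)^p)_{n\geq i}$ is a submartingale for every $p \geq 1$, and the moment bound above provides $L^p$-boundedness (hence $L^p$-convergence to $\mathbf{D}_i$). Doob's inequality applied at finite horizon $N$ and then passed to the limit by monotone convergence on the left-hand side gives
\begin{equation*}
\mathbb{P}\!\left( \sup_{n \geq i} D_i(n) \geq \frac{A}{\sqrt{i}} \right) \;\leq\; \frac{\mathbb{E}\!\left[\mathbf{D}_i^p\right]}{(A/\sqrt{i})^p} \;\leq\; \frac{\mathrm{Cst}^p\cdot p!}{A^p}.
\end{equation*}

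Finally I would optimize the choice of $p$. Setting $p = \lfloor A^{2/3} \rfloor$ and using $p! \leq p^p$,
\begin{equation*}
\frac{\mathrm{Cst}^p\cdot p!}{A^p} \;\leq\; \left(\frac{\mathrm{Cst}\cdot A^{2/3}}{A}\right)^p \;=\; \left(\frac{\mathrm{Cst}}{A^{1/3}}\right)^p,
\end{equation*}
which, for $A$ large enough that $\mathrm{Cst}/A^{1/3} \leq 1/e$, is bounded by $e^{-p} \leq \exp(-A^{2/3}/\mathrm{Cst})$; the remaining finite range $8 < A \leq A_0$ is absorbed by adjusting the multiplicative constant. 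The main obstacle is making the moment bound uniform in $i \geq 1$: when $p$ exceeds $2i$ (which only concerns small $i$), the crude inequality $\Gamma(p+2i-2)/\Gamma(2i-2) \geq (2i-2)^p$ must be replaced by a Stirling-type estimate. However, the exponent $A^{2/3}$ is quite far from sharp---the method would actually yield the stronger $e^{-cA}$ bound after a tight optimization of $p \approx A$---so there is plenty of slack to absorb this loss.
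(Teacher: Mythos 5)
Your approach is genuinely different from the paper's and, modulo one gap noted below, it works and in fact gives a \emph{stronger} bound. The paper proceeds in two steps: it first establishes a tail estimate on the limit $\mathbf{D}_i=(1-B_{i-1})Z_i$ by splitting according to whether $Z_i\ge A^{1/3}\sqrt{i}$, bounding each piece directly from the densities in~\eqref{eq:densities} to get $\mathrm{Cst}\cdot e^{-A^{2/3}}$, and then transfers this to the running supremum by stopping the martingale at $\theta_i=\inf\{n: D_i(n)\ge A/\sqrt{i}\}$ and showing via the Markov property and Paley--Zygmund that, conditionally on $\theta_i<\infty$, the limit $\mathbf{D}_i$ stays $\gtrsim A/\sqrt{i}$ with probability $\ge 1/8$. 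You instead compute moments of $\mathbf{D}_i$, invoke Doob's maximal inequality for the submartingale $D_i(n)^p$, and optimize over $p$. Your route is more mechanical and, since the moments $\mathbb{E}[\mathbf{D}_i^p]=\frac{\Gamma((p+2i-1)/2)}{\Gamma((2i-1)/2)}\cdot\frac{\Gamma(p+1)\Gamma(2i-2)}{\Gamma(p+2i-2)}$ are controlled by $p!/i^{p/2}$ uniformly in $i\ge 2$ (with $i=1$ reducing to $i=2$ via $\mathbf{D}_1\overset{(d)}{=}\mathbf{D}_2$), the optimization at $p\approx cA$ actually yields $e^{-cA}$ rather than the paper's $e^{-A^{2/3}/\mathrm{Cst}}$ --- the latter is an artefact of the $A^{1/3}/A^{2/3}$ split, and $e^{-cA}$ is essentially sharp for large $i$ since $\P(1-B_{i-1}\ge A/i)\approx e^{-2A}$. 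Of course the weaker bound you stop at, $e^{-A^{2/3}/\mathrm{Cst}}$, is all the lemma asks for, and is what Proposition~\ref{prop:size_packet} uses.

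The one place where your sketch glosses over a real step is the assertion ``the moment bound above provides $L^p$-boundedness'' of the martingale $(D_i(n))_n$. A bound on $\mathbb{E}[\mathbf{D}_i^p]$ does \emph{not} in general bound $\sup_N\mathbb{E}[D_i(N)^p]$; a nonnegative martingale can converge a.s.\ to a limit with all moments finite while its own $L^p$ norms blow up (e.g.\ a multiplicative martingale collapsing to $0$). What saves you here is that $(D_i(n))_n$ is uniformly integrable --- one checks from Lemma~\ref{lem:lawdi} that $\mathbb{E}[\mathbf{D}_i]=\frac{1}{2(i-1)}\cdot\frac{\Gamma(i)}{\Gamma((2i-1)/2)}=\frac{1}{\alpha_i\sqrt{\pi}}=\mathbb{E}[D_i(i)]$, so there is no mass loss in the limit --- hence $D_i(n)=\mathbb{E}[\mathbf{D}_i\mid\mathcal{F}_n]$ is a closed martingale, and conditional Jensen gives $\mathbb{E}[D_i(N)^p]\le\mathbb{E}[\mathbf{D}_i^p]$ for every $N$. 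With that inserted before the Doob step, your argument is complete.
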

\begin{proof}
Let us first prove the limiting case 
 \begin{eqnarray} \label{eq:limitstep1}\P\Big(  \mathbf{D}_i \geq \frac{A}{\sqrt{i}}\Big)\leq  \mathrm{Cst}\cdot \exp( -A^{2/3}).  \end{eqnarray}
Indeed, with the notation of Lemma~(\ref{lem:lawdi}), we have
\begin{eqnarray*}\label{eq:control_asymptot_degree}\P\Big( \mathbf{D}_i \geq \frac{A}{\sqrt{i}}\Big) &=& \P\Big(Z_i(1-B_{i-1}) \geq \frac{A}{\sqrt{i}}\Big)\\
& \leq &  \P\Big(Z_i \geq A^{1/3} \sqrt{i}\Big) + \P\Big(Z_i(1-B_{i-1}) \geq \frac{A}{\sqrt{i}} \mbox{ and } Z_i \leq A^{1/3} \sqrt{i} \Big) \\
& \leq & \P\Big(Z_i \geq A^{1/3}  \sqrt{i}\Big) +\P\Big((1-B_{i-1}) \geq \frac{A^{2/3}}{i}\Big).
\end{eqnarray*}
Using the exact density \eqref{eq:densities} we can compute the second probability and bound it from above by    \begin{eqnarray} \label{eq:firstpart} \P\Big((1-B_{i-1}) \geq \frac{A^{2/3}}{i}\Big) = \Big(\big(1-\frac{A^{2/3}}{i}\big)_+\Big)^{2i-3} \leq  \mathrm{Cst} \cdot \exp(-A^{2/3})  \end{eqnarray} for some constant $ \mathrm{Cst}>0$. As for the first probability, we use the fact that if $x  \geq 2\sqrt{i}$, then the derivative of $x \mapsto -x^2 + (2i-2) \log(x)$ is smaller than $-4$. Thus, if  $x  \geq A^{1/3}\sqrt{i}$ with $A^{1/3} \geq 2$,
\begin{align*}x^{2i-2} \mathrm{e}^{-x^2} \leq (A^{1/3} \sqrt{i})^{2i-2} \mathrm{e}^{- A^{2/3} i} \cdot \mathrm{e}^{-4(x- A^{1/3} \sqrt{i})}.
\end{align*}
Using again the exact density \eqref{eq:densities}, and the fact that for all $i \geq 1$, we have $i^{i-1} \leq \mathrm{e}^i \Gamma \left( \frac{2i-1}{2}\right)/ \sqrt{2 \pi}$
\begin{align*}\P\Big(Z_i \geq A^{1/3} \sqrt{i}\Big) &\leq \int_{A^{1/3} \sqrt{i}}^{ + \infty} \mathrm{e}^{-4x} \mathrm{d}x \frac{2 (A^{2/3}i)^{i-1}}{ \Gamma \left( \frac{2i-1}{2}\right)} \mathrm{e}^{- A^{2/3}i+ 4 A^{1/3} \sqrt{i}} \\
&\leq \mathrm{Cst} \cdot  (\mathrm{e}^{- 4 A^{1/3} \sqrt{i}}) \cdot{A^{2(i-1)/3}  \mathrm{e}^{i}} \cdot   \mathrm{e}^{- A^{2/3}i+ 4 A^{1/3} \sqrt{i}} \\
& \leq \mathrm{Cst} \cdot  (A^{2/3}  \mathrm{e})^{i-1} \cdot  \mathrm{e}^{- A^{2/3}i} \\
& \leq  \mathrm{Cst} \cdot \exp \left( - A^{2/3}\right),
\end{align*}
which  together with \eqref{eq:firstpart} proves \eqref{eq:limitstep1}. \\
We now make the estimation uniform in $n$ using a stopping time argument similar as the one used in the proof of  Proposition \ref{prop:adam_in_the_packet}. We shall show that for $A > 8$ we have 
$$\P\left( \sup_{n \geq i}  D_i(n) \geq \frac{A}{\sqrt{i}}\right) \leq \mathrm{Cst} \cdot  \mathbb{P}\left(\mathbf{D}_i \geq A \cdot \frac{1}{4\sqrt{i}}\right), $$  for some   $ \mathrm{Cst}>0$. 
This,  together with \eqref{eq:control_asymptot_degree} gives the lemma. To do this, fix $A >8$ and let us introduce for $i \geq 1$ the stopping time $$ \theta_i = \inf \left\{ n \geq i :\ D_i(n) \geq  \frac{A}{\sqrt{i}} \right\}, $$ so that  $\P( \sup_{n \geq i}  D_i(n) \geq \frac{A}{\sqrt{i}})= \mathbb{P}(\theta_{i}< \infty)$. Thus, partitioning on the value of $\theta_i$ and applying the Markov property, we deduce that 
\begin{eqnarray*}
	 \mathbb{P}\left(\mathbf{D}_i \geq \frac{A}{4\sqrt{i}}\right) &\geq& \sum_{k=i}^{ + \infty} \mathbb{P}\left( {D}_i(k) \geq \frac{A}{\sqrt{i}}, \ \theta_{i} = k \right) \mathbb{P}\left.\left( \mathbf{D}_i \geq A \cdot \frac{1}{4\sqrt{i}} \right| {D}_i(k) \geq \frac{A}{\sqrt{i}}, \ \theta_{i} = k \right) \\
	 &\underset{\text{Markov}}{\geq}& \sum_{k=i}^{ + \infty} \mathbb{P}\left( {D}_i(k) \geq \frac{A}{\sqrt{i}}, \ \theta_{i} = k \right)\mathbb{P}\left.\left( \mathbf{D}_i \geq A \cdot \frac{1}{4\sqrt{i}} \right| {D}_i(k) \geq \frac{A}{\sqrt{i}}\right) \\
	&\geq& 
 \mathbb{P}(\theta_i< \infty) \cdot \inf \left\{ \mathbb{P}\left.\left(\mathbf{D}_i \geq \frac{A}{4\sqrt{i}} \right| d_i(k) =m\right) : \begin{array}{c} k \geq i, m \geq 1\\ 2(k-1)>m\\ m \geq  \frac{A}{ \sqrt{i}} \cdot ( \sqrt{\pi} \alpha_k) \end{array} \right\}
\end{eqnarray*}
Our goal is thus to provide a lower bound for the infimum on the right-hand side. Fix $ i \leq k$ and $m$ so that $d_i(k)=m$ happens with positive probability. Recalling Lemma \ref{lem:lawdi} we have
\begin{eqnarray*}
\mathbb{P}\left(\mathbf{D}_i \geq \frac{A}{4\sqrt{i}} \Big| d_i(k) =m\right) &=&
 \mathbb{P}\left(\left(1-B_{\left(m,k\right)}\right)Z_k \geq \frac{A}{4\sqrt{i}} \right). 
\end{eqnarray*}
Recall that $Z_{k} \sim \mathrm{GG} (2k-1,2)$ and $ B_{(m, k)} \sim \beta(2(k-1)- m, m)$ are independent. Notice that $1-B_{(m, k)} \sim \beta(m, 2(k-1)-m)$. Using the shorthand $X_{m,k} = (1-B_{\left(m,k\right)})Z_k$ then we have  $$ \mathbb{E}[X_{m,k}] = \frac{m}{2(k-1)} \cdot \frac{ \Gamma(k)}{  \Gamma \left( (2k-1)/2 \right) } \leq \sqrt{k}  \quad \mbox{ and }\quad  \mathbb{E}[X_{m,k}^{2}] =\frac{m (m+1)}{2(k-1)(2k-1)}\cdot \frac{ \Gamma((2k+1)/2)}{\Gamma((2k-1)/2)}.$$ We can then apply Paley-Zygmund inequality, since $2k-2 >  \frac{A}{ \sqrt{i}} \cdot ( \sqrt{\pi} \alpha_k)$ and get that \begin{align*}
 \mathbb{P}\left(X_{m,k} \geq \frac{A}{4\sqrt{i}} \right) &\geq  \mathbb{P}\left(X_{m,k} \geq A \cdot \frac{1}{4\sqrt{ik}}  \mathbb{E}\left[X_{m,k}k\right] \right) \\
 &\geq \left(1-\frac{A}{4 \sqrt{ik}}\right)^2 \frac{m}{m+1} \frac{(2k-1)\Gamma(k)^2}{2(k-1) \Gamma \left( (2k-1)/2 \right)\Gamma \left( (2k+1)/2 \right)} \\
 &\geq \frac{1}{2}\left(1-\frac{A}{2 \sqrt{ik}}\right)^2 \underset{A \leq \sqrt{ik}}{\geq} \frac{1}{8},
\end{align*}
which concludes the proof.
\end{proof}

\subsection{Size of the $\varepsilon$-packet}
We now have all the tools to prove Proposition~\ref{prop:size_packet}.
Fix $ \eta \in (0,1/8)$. We shall restrict to those small $  \varepsilon>0$ so that $\varepsilon^{-\eta} \geq - \mathrm{Cst} \log \varepsilon$ and $ \eta \varepsilon^{-\eta} > 2 \mathrm{Cst}$, where $ \mathrm{Cst}$ is the constant that appears in Lemma \ref{lem:dev_degree}. In this range of values, up to further decreasing $ \varepsilon$ to apply Lemma \ref{lem:dev_degree}, using $i ^\eta = \exp(\eta \log i) \geq 1+ \eta \log i$  we get 
\begin{align*} \mathbb{P} \left( \exists i \sup_{n \geq i} D_i(n) \geq \frac{(i/ \varepsilon)^{3\eta/2}}{ \sqrt{i}} \right) \underset{ \mathrm{Lem.} \ref{lem:dev_degree} }\leq \mathrm{Cst} \cdot\sum_{i \geq 1}   \exp \left(-  \frac{\varepsilon^{-\eta} i^\eta}{ \mathrm{Cst}}\right) &\leq \mathrm{Cst} \cdot \sum_{i \geq 1} \exp \left( -\frac{\varepsilon^{-\eta}}{ \mathrm{Cst}}(1+\eta \log i)\right)\\
& \leq  \mathrm{Cst} \cdot \sum_{i \geq 1}  \exp( \log \varepsilon - 2 \log i)\\  &= \mathrm{Cst} \cdot  \varepsilon \sum_{i \geq 1} \frac{1}{i^2}  \leq \mathrm{Cst} \cdot \varepsilon.
\end{align*}
Therefore, we can now work on the event of probability at least $1- \mathrm{Cst} \cdot \varepsilon$ defined by $$ \mathrm{Good}:=\left\{  \forall i \geq 1,\ \sup_{n \geq i} D_i(n) <  \frac{(i/ \varepsilon)^{3\eta/2}}{ \sqrt{i}} \right\}.$$
By considering only pairs $ i \leq j$ (up to a multiplicative factor 2) and upper bounding $D_i(n)$ and $D_j(n)$ by $(i/ \varepsilon)^{3\eta/2}/  \sqrt{i}$ and $ (j/ \varepsilon)^{3\eta/2}/ \sqrt{j}$ respectively, we deduce that 
\begin{align*}  \sup_{n  \geq 1}|  \mathcal{P}_{\epsilon} (n)| \mathbf{1}_{ \mathrm{Good}}  \leq & 2 \left|\left \{(i,j):  \ \begin{array}{c} 1 \leq  i <j , \\ \node{i} \sim \node{j} \end{array},  \quad \frac{(i/ \varepsilon)^{3\eta} (j/ \varepsilon)^{\frac{3\eta}{2}}}{ i\sqrt{j}} > \varepsilon \mbox{ or } \frac{(i/ \varepsilon)^{\frac{3\eta}{2}} (j/\varepsilon)^{ 3\eta}}{ \sqrt{i} j} > \varepsilon \right \}\right|  \mathbf{1}_{ \mathrm{Good}} \\
\leq & 2 \left|\left \{(i,j):  \ \begin{array}{c} 1 \leq  i <j, \\ \node{i} \sim \node{j} \end{array},  \quad  i \sqrt{j} \leq  \varepsilon^{-\frac{1+ 9\eta/2}{1- 3 \eta }}  \right \}\right| \mathbf{1}_{ \mathrm{Good}}
\end{align*}
since $ i \mapsto  i^{-1+3\eta/2}$ is decreasing. To simplify notation we put $-\frac{1+ 9 \eta/2}{1- 2  \eta } = -1-\eta'$ in the rest of the proof. 
Moreover, notice that if  $j > \varepsilon^{-2-2 \eta'}$, then  ${\sqrt{j}}   > \varepsilon^{-1- \eta'}$ and the inequality in the display above can not be satisfied; and recall that for all $j \geq 2$, there is a unique $i<j$ such that $ \node{i} \sim \node{j}$. Hence we can write 
\begin{align*} 
  \sup_{n \geq 1}| P_{\epsilon} (n)|  \mathbf{1}_{ \mathrm{Good}} &\leq 2 \sum_{1 \leq j \leq \varepsilon^{-2- 2 \eta'}} \underbrace{\mathbf{1}{ \left\{ \exists\ 1\leq i < j : \node{i} \sim \node{j},\  i \leq \frac{\epsilon^{-1- \eta'}}{\sqrt{j}} \right\}} }_{=:X_j}  \mathbf{1}_{ \mathrm{Good}}
 \end{align*}
%
Let us denote $ \mathcal{F}_j$ the natural filtration generated by the $(d_i(k) : 1 \leq i \leq k \leq j)$. Then conditionally on $ \mathcal{F}_{j-1}$, the random variable $X_j$ has a Bernoulli distribution with parameter $$ \mathrm{Param}(j) = \sum_{ 1 \leq i \leq \frac{\epsilon^{-1- \eta'}}{\sqrt{j}} \wedge j} \frac{d_i(j-1)}{2(j-1)}.$$
Notice then that on the event $ \mathrm{Good}$, conditionally on $ \mathcal{F}_{j-1}$,  the variable $\mathbf{1}_{ \mathrm{Good}} X_j$ is stochastically dominated by  a Bernoulli variable $Y_j$ independent of $ \mathcal{F}_{j-1}$ with a fixed parameter
\begin{align*} p_j &:= 1 \wedge \sum_{ 1 \leq i \leq \frac{\epsilon^{-1- \eta'}}{\sqrt{j}}} \frac{\sqrt{\pi}\alpha_j (i/ \varepsilon)^{3 \eta/2}}{ 2(j-1)\sqrt{i}} \leq 1 \wedge  \frac{ \mathrm{Cst}}{ \sqrt{j}} \varepsilon^{-3 \eta/2} \sum_{i=1}^{\frac{\epsilon^{-1- \eta'}}{\sqrt{j}}} i^{-1/2+3 \eta/2} \leq 1 \wedge \mathrm{Cst} \cdot  \left(\frac{\varepsilon^{-1/2}}{j^{3/4}}\right)^{1+ \eta''}, \end{align*} for some exponent $\eta''$ which tends to $0$ as $\eta \to 0$. 
When $j < \epsilon^{-5/6}$, we will simply bound from above the variable $Y_j$ by $1$, while when $j \geq  \epsilon^{-5/6}$, since $p_j < 1/2$ we shall dominate it stochastically by an independent Poisson random variable of parameter $2 p_j$. We deduce the following stochastic domination:
$$ \sup_{n \geq 1}|  \mathcal{P}_{\epsilon} (n)| \mathbf{1}_{ \mathrm{Good}} \underset{ \mathrm{stoch.}}{\leq}  2\left( \varepsilon^{-5/6} + 
\mathrm{Poisson} \left(   \mathrm{Cst} \cdot \sum_{ \varepsilon^{-5/6} \leq j \leq \varepsilon^{-2- 2 \eta'}} \left(\frac{\varepsilon^{-1/2}}{j^{3/4}}\right)^{1+ \eta''} \right)  \right).$$ The parameter of the Poisson random variable is bounded from above for small $ \varepsilon$ by $ \varepsilon^{-1- \eta'''}$ for yet another small constant $ \eta'''$ that goes to $0$ as $ \eta \to 0$. Using 
%
Benett's inequality \cite{taobennett}, it is very unlikely that such Poisson random variable take values much larger than $ \varepsilon^{-1- \eta'''}$ since 
\begin{align*}
\P \left(  \mathrm{Poisson}( \varepsilon^{-1-\eta'''}) \geq 2 \varepsilon^{-1-\eta'''}\right) \leq  \exp \left( -  \varepsilon^{-1-\eta'''} \cdot \left(2\log(2)-1\right) \right) \leq \epsilon,
\end{align*} for small $ \varepsilon>0$. Gathering-up the pieces, we see that with probability at least $1-  ( \mathrm{Cst}+1) \varepsilon$,  the size of  $ \mathcal{P}_{ \varepsilon} (n)$ is less than $ 6  \varepsilon^{-1-\eta'''}$ and this completes the proof, the section and the paper.

\bibliographystyle{alpha}

\bibliography{biblio}

\end{document}